\numberwithin{equation}{section}
\newcommand{\q}{\quad}
\newcommand{\ee}{{\rm e}\hspace{1pt}}
\newcommand{\dd}{\hspace{0.5pt}{\rm d}\hspace{0.5pt}}
\newcommand{\merkthree}{$\mathtt{MERK3}$}
\newcommand{\merkthrees}{$\mathtt{MERK3}^*$}
\newcommand{\merkfour}{$\mathtt{MERK4}$}
\newcommand{\merkfours}{$\mathtt{MERK4}^*$}
\newcommand{\merkfive}{$\mathtt{MERK5}$}
\newcommand{\merkfives}{$\mathtt{MERK5}^*$}
\newcommand{\merbthree}{$\mathtt{MERB3}$}
\newcommand{\merbfour}{$\mathtt{MERB4}$}
\newcommand{\merbfive}{$\mathtt{MERB5}$}
\newcommand{\merbsix}{$\mathtt{MERB6}$}
\newcommand{\mrigarkthree}{$\mathtt{MRI}$-$\mathtt{GARK}$-$\mathtt{ERK33a}$}
\newcommand{\mrigarkfour}{$\mathtt{MRI}$-$\mathtt{GARK}$-$\mathtt{ERK45a}$}
\newcommand{\mrigarkth}{$\mathtt{MRI}$-$\mathtt{GARK33a}$}
\newcommand{\mrigarkths}{$\mathtt{MRI}$-$\mathtt{GARK33a}^*$}
\newcommand{\mrigarkf}{$\mathtt{MRI}$-$\mathtt{GARK45a}$}
\newcommand{\mrigarkfs}{$\mathtt{MRI}$-$\mathtt{GARK45a}^*$}
\newcommand{\nnltermn}{\hat{N}_n(t_n,\hat{u}_n)}
\newcommand{\vn}{\hat{V}_n}
\newcommand{\Dntwo}{\widehat{D}_{n2}}
\crefname{hypothesis}{Hypothesis}{Hypotheses}
\title{Multirate Exponential Rosenbrock Methods \thanks{Submitted to the editors DATE.
\funding{The first author is supported by NSF grant DMS--2012022. The second and third authors were supported in part by the U.S. Department of Energy, Office of Science, Office of Advanced Scientific Computing Research, Scientific Discovery through Advanced Computing (SciDAC) Program through the FASTMath Institute, under Lawrence Livermore National Laboratory subcontract B626484 and DOE award DE-SC0021354.}}}
\author{Vu Thai Luan\thanks{Department of Mathematics and Statistics, Mississippi State University,
 Mississippi State, MS, 39762 (\email{luan@math.msstate.edu})}
\and Rujeko Chinomona\thanks{Department of Mathematics, Southern Methodist
    University, Dallas, TX 75275-0156 (\email{rchinomona@smu.edu}, \email{reynolds@smu.edu}).}
\and Daniel R. Reynolds\footnotemark[3]
}
\begin{document}
\maketitle

\begin{keywords}
  multirate time integration,
  exponential Rosenbrock methods,
  convergence analysis
\end{keywords}

\begin{AMS}
65L05, 65L06, 65M20, 65L20
\end{AMS}

\begin{abstract}
In this paper we propose a novel class of methods for high order accurate integration of multirate systems of ordinary differential equation initial-value problems.  The proposed methods construct multirate schemes by approximating the action of matrix $\varphi$-functions within explicit exponential Rosenbrock (ExpRB) methods, thereby called \emph{Multirate Exponential Rosenbrock} (MERB) methods. They consist of the solution to a sequence of modified ``fast'' initial-value problems, that may themselves be approximated through subcycling any desired IVP solver.  In addition to proving how to construct MERB methods from certain classes of ExpRB methods, we provide rigorous convergence analysis of these methods and derive efficient MERB schemes of orders two through six (the highest order ever constructed infinitesimal multirate methods).  We then present numerical simulations to confirm these theoretical convergence rates, and to compare the efficiency of MERB methods against other recently-introduced high order multirate methods.
\end{abstract}

\section{Introduction}

In this paper, we consider numerical methods to perform highly accurate time integration for multirate systems of ordinary differential equation (ODE) initial-value problems (IVPs).  The primary characteristic of these problems is that they are comprised of two or more components that on their own would evolve on significantly different time scales.  Such problems may be written in the general additive form
\begin{equation} \label{eq1.1}
  u'(t)= F(t, u(t)) := F_{f}(t,u)+F_{s}(t,u), \q t\in[t_0, T], \q u(t_0)= u_0,
\end{equation}
where $F_{f}$ and $F_{s}$ contain the ``fast'' and ``slow'' operators or variables, respectively.  Typically, either due to stability or accuracy limitations the fast processes must be evolved with small step sizes; however the slow processes could allow much larger time steps.  Such problems frequently arise in the simulation of ``multiphysics'' systems, wherein separate models are combined together to simulate complex physical phenomena \cite{Keyes2013}.   While such problems may be treated using explicit, implicit, or mixed implicit-explicit time integration methods that evolve the full problem using a shared time step size, this treatment may prove inefficient, inaccurate or unstable, depending on which time scale is used to dictate this shared step size.  Historically, scientific simulations have treated such problems using \emph{ad hoc} operator splitting schemes where faster components are ``subcycled'' using smaller time steps than slower components.  Schemes in this category include Lie--Trotter \cite{mclachlanSplittingMethods2002} and Strang--Marchuk \cite{Marchuk1968,strangConstructionComparisonDifference1968a} techniques, that are first and second-order accurate, respectively.  In recent years, however, methods with increasingly high orders of accuracy have been introduced.  Our particular interest lies in methods allowing so-called ``infinitesimal'' formulations, wherein the fast time scale is assumed to be solved exactly, typically through evolution of a sequence of modified fast IVPs,
\[
   v'(\tau) = F_{f}(\tau,v) + g(\tau),\quad \tau\in[\tau_0,\tau_f], \quad v(\tau_0)=v_0,
\]
and where the forcing function $g(\tau)$, time interval $[\tau_0,\tau_f]$, and initial condition $v_0$ are determined by the multirate method to incorporate information from the slow time scale.  In practice, however, these fast IVPs are solved using any viable numerical method, typically with smaller step size than is used for the slow dynamics.  While both the legacy Lie--Trotter and Strang--Marchuk schemes satisfy this description, each uses $g(\tau)=0$, and only couple the time scales through the initial condition $v_0$.  The first higher-order infinitesimal multirate methods were the \emph{multirate infinitesimal step} (MIS) methods \cite{schlegelMultirateRungeKutta2009,wenschMultirateInfinitesimalStep2009}, that allowed up to third order accuracy.  These have been extended by numerous authors in recent years to support fourth and fifth orders of accuracy, as well as implicit or even mixed implicit-explicit treatment of the slow time scale \cite{Bauer2019Extended,Chinomona2020,LCR2020,Sandu2019,sextonRelaxedMultirateInfinitesimal2018}.

Most higher-order ($\ge 3$) infinitesimal methods, including MIS, relaxed MIS \cite{sextonRelaxedMultirateInfinitesimal2018}, extended MIS \cite{Bauer2019Extended}, multirate infinitesimal GARK \cite{robertsCoupledMultirateInfinitesimal2020,Sandu2019}, and implicit-explicit multirate infinitesimal GARK \cite{Chinomona2020}, place no restrictions on the operators $F_{f}$ and $F_{s}$.  The corresponding order conditions for these methods are rooted in partitioned Runge--Kutta theory, to the end that the number of order conditions grows exponentially with the desired order of accuracy, to the effect that none of these methods have been proposed with order of accuracy greater than four.

In previous work, we presented an alternate approach for deriving infinitesimal multirate methods that was based on exponential Runge--Kutta (ExpRK) theory, named \emph{multirate exponential Runge--Kutta} (MERK) methods \cite{LCR2020}.  A particular benefit of this theory is that exponential Runge--Kutta methods require fewer order conditions than partitioned Runge--Kutta methods; however, to leverage this theory, MERK methods require that the fast time scale operator is autonomous and that it depends \emph{linearly} on the solution $u$, i.e., these consider the IVP
\begin{equation}
    \label{eq:semilinearIVP}
    u'(t) = F(t,u(t)) := \mathcal{L} u + \mathcal{N}(t,u), \q t\in[t_0,T], \q u(t_0)= u_0,
\end{equation}
where the ``fast'' and ``slow'' components are $F_{f}(t,u) = \mathcal{L}u$ and $F_s(t,u) = \mathcal{N}(t,u)$, respectively.  With this restriction in place, however, MERK methods have been proposed with orders of accuracy up to five.

In this work, we address the case of a non-autonomous and nonlinear fast time scale operator $F_{f}(t,u)$ by proposing to use a dynamic linearization approach that updates the operators $\mathcal{L}$ and $\mathcal{N}$ within each time step.  We then leverage this dynamic linearization approach through building multirate schemes from exponential Rosenbrock (ExpRB) methods. This new class of multirate schemes, called \emph{Multirate Exponential Rosenbrock} (MERB) methods, approximates the action of matrix $\varphi$-functions within explicit ExpRB methods, and consist of solving a sequence of modified linear ODE-IVPs, which can be integrated using any desired ODE solvers.
Moreover, we establish an elegant convergence theory for MERB methods, allowing us to determine a minimum order of accuracy for the numerical methods needed for solving the corresponding fast time scale IVPs.  In addition to this theory, we generalize the coefficients for a number of high-order ExpRB methods
and exploit their parallel stage structure to derive efficient multirate methods of very high order (including the first-ever infinitesimal multirate method of order six), with optimized numbers of modified fast IVPs.
Our numerical experiments show that these new proposed MERB schemes are uniformly the most efficient when considering slow function calls (this is particular of interest for multirate systems where the fast component is much less costly to compute than the slow component), and thus are very competitive in comparison with recently developed high order multirate methods such as MERK and MRI-GARK.

The remainder of this paper is organized as follows.  We first present the structure of ExpRB methods (Section \ref{sec2.1}).  Then in Section \ref{sec2.2} we interpret the corresponding ExpRB internal stages and time step approximations as exact solutions to modified ``fast'' initial-value problems, thereby deriving MERB methods.  In Section \ref{sec:analysis} we present rigorous convergence analysis for this family of newly-proposed methods.  Then in Section \ref{section:2.4} we construct specific multirate methods from this family for practical use,
and discuss techniques for their numerical implementation in Section \ref{section:2.5}.  In Section \ref{sec:numerical_results} we provide detailed numerical results to compare the performance of the proposed methods with the recent MERK methods of orders three through five, as well as with third and fourth order explicit MRI-GARK methods.  Finally, we provide concluding remarks and discuss avenues for future research in Section \ref{sec:conclusion}.

\section{Multirate Exponential Rosenbrock Methods}
\label{section3}


\subsection{Exponential Rosenbrock schemes}
\label{sec2.1}
ExpRB methods are constructed by linearizing the vector field $F(t,u)$ at each step along the numerical solution $(t_n,u_n)$,
\begin{equation} \label{eq2.1}
  u'(t)=F(t,u(t))=J_{n} u(t)+ V_n t+ N_n(t,u(t))
\end{equation}
with
\begin{equation}  \label{eq2.2}
\begin{aligned}
  J_n =\frac{\partial F} {\partial u}(t_n, u_n), \q V_n =\frac{\partial F} {\partial t}(t_n, u_n), \q N_{n} (t, u)= F(t,u)- J_{n}u - V_{n}t.
\end{aligned}
\end{equation}
We note that if \eqref{eq1.1} is in fact autonomous, i.e., $u'(t)=F(u(t))$, then this linearization simplifies since $V_n =0$ and $N_{n} (t, u) = N_{n} (u)= F(u)- J_{n}u$.

One can represent the exact solution to \eqref{eq2.1} at time $t_{n+1}=t_n +H$ as in \cite{Luan2014} by applying the variation-of-constants formula (a.k.a., Duhamel's principle),
\begin{equation} \label{eq2.3}
\begin{aligned}
  u(t_{n+1})= \ee^{H J_n}u(t_n) &+ \int_{0}^{H} \ee^{(H-\tau) J_n} \Big( V_{n}(t_n +\tau )+ N_n ( t_n+\tau, u(t_n+\tau )) \Big) \dd\tau\\
  = \ee^{H J_n}u(t_n) &+H \varphi_1 (H J_n)V_n t_n+H^2 \varphi_2 (H J_n)V_n \\
  &+ \int_{0}^{H} \ee^{(H-\tau) J_n} N_n( t_n+\tau, u(t_n+\tau )) \dd\tau,
\end{aligned}
\end{equation}
where $\varphi_k(Z)$ ($Z=H \mathcal{L}$) belong to the family of $\varphi$-functions given by
\begin{equation} \label{eq2.4}
  \varphi_{k}(Z)=\frac{1}{H^k}\int_{0}^{H} \ee^{(H-\tau )\frac{Z}{H}} \frac{\tau^{k-1}}{(k-1)!} \dd \tau , \quad k\geq 1.
\end{equation}
Explicit ExpRB methods approximate the integral in \eqref{eq2.3} by using a quadrature rule with nodes $c_i$ in $[0,1]$ ($i = 1,\ldots,s$)  ($c_1=0$).  Denoting the resulting approximations $u_n \approx u(t_n)$ and $U_{ni}\approx u(t_n +c_i H)$, ExpRB methods may be written as
\begin{equation} \label{eq2.6}
\begin{aligned}
  U_{ni}= u_n & + c_i H \varphi _{1} ( c_i H J_n)F(t_n, u_n) +  c^{2}_{i} H^2 \varphi _{2} ( c_i H J_n)V_n
  + H \sum_{j=2}^{i-1}a_{ij}(H J_n) D_{nj},  \\
  u_{n+1}= u_n & + H \varphi _{1} ( H J_n)F(t_n, u_n) + H^2 \varphi _{2} ( H J_n)V_n + H \sum_{i=2}^{s}b_{i}(H J_n) D_{ni},
\end{aligned}
\end{equation}
where
\begin{equation} \label{eq:Dni}
  D_{ni}=N_n (t_n+c_i H, U_{ni})- N_n (t_n, u_n ),
\end{equation}
 ($i=2,\ldots,s$) and where $D_{n1}=0$  \cite{HOS09,Luan2014}.  Here, the weights $a_{ij}(H J_n)$  and $b_i(H J_n)$ are usually chosen (by construction) as linear combinations of the $\varphi_k (c_i H J_n)$ and $\varphi_k (H J_n)$ functions given in \eqref{eq2.4}, respectively.  These unknown functions can be determined by solving  order conditions, depending on the required order of accuracy.

\begin{remark}\label{remark2.1}
(Order conditions)
For later use, in Table \ref{tb2.1} we recall the stiff order conditions for ExpRB methods up to order 6 from \cite{LO13}.  We note that an ExpRB method of order 6 only requires 7 conditions, which is much less than the 36 conditions needed for explicit Runge--Kutta or exponential Runge--Kutta methods of the same order. This is the advantage of the dynamic linearization approach \eqref{eq2.1}, and can be understood by observing from \eqref{eq2.2} that
\begin{equation} \label{eq2.7}
  \frac{\partial N_n}{\partial u}(t_n,u_n) =0 \q\text{and}\q \frac{\partial N_n}{\partial t}(t_n,u_n)= 0.
\end{equation}
This property significantly simplifies the number of order conditions, particularly for higher-order schemes. A further consequence of \eqref{eq2.7} is that from \eqref{eq:Dni} we have $D_{ni}=\mathcal{O}(H^2)$, meaning that ExpRB methods are at least of order 2.
\renewcommand{\arraystretch}{1.6}%
\begin{table}[h]
\caption{Stiff order conditions for ExpRB methods up to order 6 (from \cite{LO13}). Here $Z, K$, and $M$ denote arbitrary square matrices.}
\begin{center}
\begin{tabular}{ |c|c|c| }
\hline
No. & Order condition & Order \\
\hline
1&$\sum_{i=2}^{s} b_i (Z)c^2_i=2\varphi_3 (Z) $&3 \\
 \hline
2&$\sum_{i=2}^{s} b_i (Z)c^3_i=6\varphi_4 (Z) $&4 \\
\hline
3&$\sum_{i=2}^{s} b_i (Z)c^4_i=24\varphi_5 (Z) $&5 \\
4&$\sum_{i=2}^{s} b_i (Z)c_i  K\big( \sum_{k=2}^{i-1}a_{ik}(Z)\frac{c^2_k}{2!}-c^{3}_i \varphi_{3} ( c_i Z)\big)=0$&5 \\
\hline
5&$\sum_{i=2}^{s}b_{i}(Z)c_i^5 = 120\varphi_6(Z)$ & 6\\
6&$\sum_{i=2}^{s}b_{i}(Z) c_i^2 M \big( \sum_{k=2}^{i-1}a_{ik}(Z)\frac{c^2_k}{2!}-c^{3}_i \varphi_{3} ( c_i Z)\big)=0$& 6 \\
7&$\sum_{i=2}^{s}b_{i}(Z) c_i K\big( \sum_{k=2}^{i-1}a_{ik}(Z)\frac{c^3_k}{3!}-c^{4}_i \varphi_{4} ( c_i Z)\big)=0 $ & 6\\
\hline
\end{tabular}
\end{center}
\label{tb2.1}
\end{table}%

\end{remark}

\subsection{A multirate procedure for ExpRB methods}
\label{sec2.2}
Inspired by our recent work \cite{LCR2020}, we now show how ExpRB schemes can be interpreted as a class of multirate infinitesimal step-type methods. Namely, we construct modified differential equations whose exact solutions corresponding to the ExpRB internal stages $U_{ni}$ ($i=2,\ldots,s$) and the final stage $u_{n+1}$.
\begin{lemma}\label{Lemma2.1}
  Consider an explicit exponential Rosenbrock scheme \eqref{eq2.6} where the weights $a_{ij}(H J_n)$ and $b_{i}(H J_n)$ can be written as linear combinations of  $\varphi_k$ functions,
  \begin{equation} \label{eq2.8}
    a_{ij}(H J_n)=\sum_{k=1}^{\ell_{ij}}\alpha^{(k)}_{ij}\varphi_{k}(c_i HJ_n), \q
    b_{i}(H J_n)=\sum_{k=1}^{m_i}\beta^{(k)}_{i}\varphi_{k}(H J_n),
  \end{equation}
  and where $\ell_{ij}$ and $m_i$ are some positive integers. Then, $U_{ni}$ and $u_{n+1}$  are the
  exact solutions of the following (linear) modified differential equations 
  \begin{subequations} \label{eq2.9}
    \begin{align}
     v'_{ni}(\tau)&=J_nv_{ni}(\tau) +   p_{ni}(\tau), && v_{ni}(0) = u_n, \qquad  i=2,\ldots,s, \label{eq2.9a} \\
     v'_{n+1}(\tau)&=J_n v_{n+1}(\tau)  +   q_{n}(\tau),  &&  v_{n+1}(0) = u_n \hspace{2.3cm}  \label{eq2.9b}
    \end{align}
  \end{subequations}
  at the times $\tau= c_i H$ and $\tau= H$, respectively. Here
  $p_{ni}(\tau)$ and $q_{n}(\tau)$ are polynomials in $\tau$ given by
  \begin{subequations} \label{eq2.10}
    \begin{align}
      p_{ni}(\tau)&=N_n(t_n, u_n)+ (t_n + \tau)V_n + \sum_{j=2}^{i-1} \Big(\sum_{k=1}^{\ell_{ij}}\dfrac{\alpha^{(k)}_{ij}}{c^k_i H^{k-1} (k-1)!}\tau^{k-1}\Big) D_{nj}, \label{eq2.10a} \\
      q_{n}(\tau) &=N_n(t_n, u_n)+ (t_n + \tau)V_n +  \sum_{i=2}^{s} \Big(\sum_{k=1}^{m_i}\dfrac{\beta^{(k)}_{i} }{H^{k-1}(k-1)!}\tau^{k-1} \Big) D_{ni}.  \label{eq2.10b}
    \end{align}
  \end{subequations}
\end{lemma}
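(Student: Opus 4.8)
The plan is to verify that each claimed modified IVP in \eqref{eq2.9} has an explicit solution via the variation-of-constants formula, and that evaluating this solution at the appropriate time reproduces exactly the ExpRB formulas in \eqref{eq2.6}. The key tool is that for a linear IVP $v'(\tau) = J_n v(\tau) + p(\tau)$ with $v(0)=u_n$, Duhamel's principle gives $v(\tau) = \ee^{\tau J_n}u_n + \int_0^\tau \ee^{(\tau-\sigma)J_n} p(\sigma)\,\dd\sigma$. The whole argument reduces to computing these integrals when $p$ is the polynomial forcing term $p_{ni}$ (resp.\ $q_n$) and matching monomials in $\tau$ against $\varphi$-functions.

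First I would establish the single indispensable identity: for the monomial forcing $p(\sigma) = \sigma^{k-1}/(k-1)!$ times a constant vector $w$, the convolution integral evaluates as $\int_0^\tau \ee^{(\tau-\sigma)J_n} \frac{\sigma^{k-1}}{(k-1)!} w\,\dd\sigma = \tau^k \varphi_k(\tau J_n) w$, which follows directly from the integral definition \eqref{eq2.4} after the substitution $\sigma = (\tau/H)\cdot(\text{dummy})$ — in fact it is just \eqref{eq2.4} with $H$ replaced by $\tau$. I would also need the constant-forcing and linear-in-$\tau$ cases as special instances: a constant vector $w$ yields $\tau\varphi_1(\tau J_n)w$, and the term $(t_n+\tau)V_n$ splits into $t_n V_n$ (constant, giving $\tau\varphi_1(\tau J_n) t_n V_n$) plus $\tau V_n$ (the $k=2$ monomial up to the factorial, giving $\tau^2\varphi_2(\tau J_n)V_n$).

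Next I would assemble the solution for the internal-stage equation \eqref{eq2.9a}. Applying Duhamel termwise to $p_{ni}$ from \eqref{eq2.10a} and evaluating at $\tau = c_i H$, the homogeneous part gives $\ee^{c_i H J_n}u_n$; the $N_n(t_n,u_n)+t_nV_n$ piece combines with the leading $\ee^{c_iHJ_n}u_n$ term (using $\ee^{Z}=I+Z\varphi_1(Z)$ and $F(t_n,u_n)=J_nu_n+t_nV_n+N_n(t_n,u_n)$ from \eqref{eq2.1}) to reproduce $u_n + c_iH\varphi_1(c_iHJ_n)F(t_n,u_n)$; the $\tau V_n$ piece gives $c_i^2H^2\varphi_2(c_iHJ_n)V_n$; and each $D_{nj}$ term, whose coefficient was engineered in \eqref{eq2.10a} precisely so that the $\tau^{k-1}/[c_i^kH^{k-1}(k-1)!]$ prefactor cancels the $\tau^k = (c_iH)^k$ and $(k-1)!$ factors arising from the convolution identity, collapses to $\sum_{k=1}^{\ell_{ij}}\alpha_{ij}^{(k)}\varphi_k(c_iHJ_n)D_{nj} = a_{ij}(HJ_n)D_{nj}$ after factoring out $H$. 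This matches \eqref{eq2.6} exactly; the identical computation with $q_n$ evaluated at $\tau = H$ handles \eqref{eq2.9b}.

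The main obstacle is bookkeeping rather than conceptual: one must track the scaling factors carefully, since the convolution produces $\varphi_k$ evaluated at $c_iHJ_n$ with an argument $(c_iH)^k$, and the coefficient in \eqref{eq2.10a} carries the compensating $1/(c_i^k H^{k-1})$. Verifying that these precisely conspire to leave a single overall factor of $H$ outside the sum (so the result reads $H\sum_j a_{ij}(HJ_n)D_{nj}$ as in \eqref{eq2.6}) is the one place where a sign or power-of-$H$ slip would break the correspondence, and is where I would spend the most care. Everything else is a direct consequence of linearity of the integral and the defining formula \eqref{eq2.4} for the $\varphi$-functions.
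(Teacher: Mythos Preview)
Your proposal is correct and follows essentially the same approach as the paper: both arguments hinge on the identity $\int_0^{\tau}\ee^{(\tau-\sigma)J_n}\tfrac{\sigma^{k-1}}{(k-1)!}\,\dd\sigma = \tau^{k}\varphi_k(\tau J_n)$, which is exactly the integral definition \eqref{eq2.4}. The only cosmetic difference is directional: the paper starts from the ExpRB formulas \eqref{eq2.6}, inserts the integral representations \eqref{eq:coeff} of $a_{ij}$ and $b_i$, and recognizes the result as the Duhamel solution of \eqref{eq2.9}, whereas you start from the modified ODEs, apply Duhamel, and verify that the outcome matches \eqref{eq2.6}; the algebraic content (including the rewriting $\ee^{c_iHJ_n}u_n + c_iH\varphi_1(c_iHJ_n)\big(N_n(t_n,u_n)+t_nV_n\big)=u_n+c_iH\varphi_1(c_iHJ_n)F(t_n,u_n)$ via $\ee^{Z}=I+Z\varphi_1(Z)$) is identical.
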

\begin{proof}
  The proof can be carried out in a very similar manner as in \cite[Theorem~3.1]{LCR2020}. Here, we only sketch the main idea. First, we insert the $\varphi_k$ functions from \eqref{eq2.4} into \eqref{eq2.8} to get the integral representations of $a_{ij}(H J_n)$ and $b_{i}(H J_n)$:
 \begin{subequations}\label{eq:coeff}
 \begin{align}
   a_{ij}(H J_n) &=\int_{0}^{c_i H} \ee^{(c_i H-\tau)J_n} \sum_{k=1}^{\ell_{ij}}\dfrac{\alpha^{(k)}_{ij}}{(c_i H)^{k} (k-1)!}\tau^{k-1}\dd\tau, \label{eq:coeff_a} \\
    b_{i}(H J_n) &=\int_{0}^{H} \ee^{(H-\tau)J_n } \sum_{k=1}^{m_i}\dfrac{\beta^{(k)}_{i}}{H^{k}(k-1)!}\tau^{k-1}\dd\tau. \label{eq:coeff_b}
 \end{align}
 \end{subequations}
 Inserting these into \eqref{eq2.6} shows that the ExpRB stages and time step update may be written as
  \begin{subequations} \label{eq2.11}
    \begin{align}
      U_{ni}&= \ee^{c_i  H J_n} u_n +\int_{0}^{c_i H} \ee^{(c_i H-\tau)J_n}   p_{ni}(\tau) \dd\tau, \q  i=2,\ldots,s,  \label{eq2.11a} \\
      u_{n+1} &= \ee^{H J_n}u_n +  \int_{0}^{H} \ee^{(H -\tau)J_n}  q_{n}(\tau) \dd\tau,  \label{eq2.11b}
    \end{align}
  \end{subequations}
which clearly show that   $U_{ni}=v_{ni}(c_i H)$ and $u_{n+1}=v_{n+1}(H)$ by means of the variation-of-constants formula applied to \eqref{eq2.9a} and \eqref{eq2.9b}, respectively.
\end{proof}
\textbf{MERB methods}.
Starting from the initial value  $u_0=u(t_0)$, equations \eqref{eq2.9} from Lemma~\ref{Lemma2.1} suggest a multirate procedure to approximate the numerical solutions $u_{n+1}$ ($n=0, 1, 2, \ldots$) obtained by ExpRB methods. Specifically, one may integrate the slow process $(V_n t + N_n(t,u))$ using a macro time step $H$, and integrate the fast process $(J_n u)$ using a micro time step $h=H/m$ (where $m>1$ is an integer representing the time scale separation factor) via solving the ``fast'' ODEs \eqref{eq2.9a} on $[0, c_i H]$ and \eqref{eq2.9b} on $[0, H]$.
Let us denote the corresponding numerical solutions of these ODEs as $\widehat{U}_{ni}$ ($ \approx v_{ni}(c_i H)=U_{ni}$) and $\hat{u}_{n+1}$ ($\approx v_{n+1}(H)=u_{n+1}$).  Then this multirate procedure consists in each step of solving \eqref{eq2.9}--\eqref{eq2.10} with the initial value $\hat{u}_n$ ($\hat{u}_0 = u_0$). Since we must linearize each step around the approximate solution $\hat{u}_n$ instead of the true value $u_n$, we denote the approximations of $J_n, V_n, N_{n} (t, u)$, and $D_{nj}$ appearing in polynomials \eqref{eq2.10} as
\begin{subequations} \label{eq:terms}
\begin{align}
\hat{J}_n &=\frac{\partial F} {\partial u}(t_n, \hat{u}_n), \ \hat{V}_n =\frac{\partial F} {\partial t}(t_n, \hat{u}_n), \  \hat{N}_n (t, u) = F(t,u)- \hat{J}_{n}u - \hat{V}_{n}t, \label{eq:JVN_hat}  \\
 \widehat{D}_{nj} &= \hat{N}_n (t_n+c_j H, \widehat{U}_{nj}) - \hat{N}_n (t_n, \hat{u}_n ).   \label{eq:D_hat}
 \end{align}
\end{subequations}
Thus, starting with $\hat{u}_0 = u_0$, for each time step $t_n\to t_{n+1}$ we solve perturbed linear ODEs for $i=2,\ldots,s$:
\begin{equation} \label{eq:MERBa}
  y'_{ni}(\tau)= \hat{J}_n y_{ni}(\tau) +   \hat{p}_{ni}(\tau), \q \tau\in [0,c_iH], \q y_{ni}(0)= \hat{u}_n,
\end{equation}
with
\begin{equation} \label{eq:MERB:poly:p}
 \hat{p}_{ni}(\tau) = \hat{N}_n (t_n, \hat{u}_n)+ (t_n + \tau) \hat{V}_n + \sum_{j=2}^{i-1} \Big(\sum_{k=1}^{\ell_{ij}}\dfrac{\alpha^{(k)}_{ij}}{c^k_i H^{k-1} (k-1)!}\tau^{k-1}\Big) \widehat{D}_{nj},
\end{equation}
to obtain
\[
  \widehat{U}_{ni}\approx y_{ni}(c_i H) \approx v_{ni}(c_i H)=U_{ni}.
\]
Then, using these approximations, we find
\begin{equation} \label{eq:MERB:poly:q}
  \hat{q}_{n}(\tau) = \hat{N}_n (t_n, \hat{u}_n)+ (t_n + \tau) \hat{V}_n  +  \sum_{i=2}^{s} \Big(\sum_{k=1}^{m_i}\dfrac{\beta^{(k)}_{i} }{H^{k-1}(k-1)!}\tau^{k-1} \Big) \widehat{D}_{ni}
\end{equation}
and solve one additional linear ODE
 \begin{equation} \label{eq:MERBb}
  y'_{n+1}(\tau)= \hat{J}_n y_{n+1}(\tau)  +   \hat{q}_{n}(\tau),  \q \tau\in[0, H], \q  y_{n+1}(0)=  \hat{u}_n
 \end{equation}
 to obtain the update
\[
  \hat{u}_{n+1}\approx y_{n+1}(H) \approx v_{n+1}(H)=u_{n+1}.
\]
Since this process can be derived from ExpRB schemes satisfying  \eqref{eq2.8}, we call the resulting methods \eqref{eq:MERBa}--\eqref{eq:MERBb} \emph{Multirate Exponential Rosenbrock (MERB)} methods.  Note that since  $\hat{U}_{n1}$ and $y_{n1}(0)$ do not enter the MERB scheme, for the sake of completeness, one can define $\hat{U}_{n1} =  y_{n1}(0) = \hat{u}_n$.

\begin{remark}\label{remark2.3}
(A comparison with MERK methods).
Based on their structure in \eqref{eq:MERBa}--\eqref{eq:MERBb}, MERB methods have similar structure to MERK methods. Hence, they can retain MERK's interesting features, including very few evaluations of the costly slow components, and they do not require computing matrix functions as ExpRB methods do.  The main difference is that at each integration step MERB methods must update the linearization components $\hat{J}_n$, $\hat{V}_n$, $\hat{N}_n$ and $\hat{D}_{nj}$. However, this increased cost may be balanced by the fact that, due to the property \eqref{eq2.7}, high order MERB methods should require considerably fewer modified ODEs than MERK methods of the same order (see Section~\ref{section:2.4}).
\end{remark}
\subsection{Convergence analysis of MERB methods}\label{sec:analysis}
\subsubsection{Analytical framework}\label{framework}
To analyze the convergence of MERB methods, we employ the abstract framework of strongly continuous semigroups (see, e.g.,  \cite{EN2000,PAZY83}) on a Banach space $X$. Throughout this paper, we denote the norm in X by $\| \cdot \|$.  Let
\begin{equation} \label{eq2.12}
  J = \frac{\partial F}{\partial u}(t, u)
\end{equation}
be the Fr\'echet partial derivative of $F$. We make use of the following assumptions.

{\em Assumption 1. The Jacobian  \eqref{eq2.12}  is the generator of a strongly continuous semigroup  $\ee^{tJ}$ in $X$}. This implies that  there exist constants $C$ and $\omega$ such that
\begin{equation} \label{eq:bound1}
  \left\|\ee^{tJ}\right\| \leq C\ee^{\omega t}, \quad t\geq 0,
\end{equation}
and consequently $\varphi_k(HJ)$, $a_{ij}(HJ)$ and $b_{i}(HJ)$ are bounded operators. \\

{\em Assumption 2. The solution $u:[t_0, T]\to X$  of \eqref{eq1.1} is sufficiently smooth with derivatives in $X$, and $F: [t_0, T]\times X \to X$ is sufficiently Fr\'echet differentiable in a strip along the exact solution to \eqref{eq1.1}.
All derivatives occurring are assumed to be uniformly bounded.}\\

{\em  Stability bound.}
Since $\hat{J}_n =\frac{\partial F} {\partial u}(t_n, \hat{u}_n) $ arising in MERB methods changes at every step, and $\hat{J}_n \approx J_n $, we also employ the following stability bound (for the discrete evolution operators on $X$) of exponential Rosenbrock methods (see \cite[Sect. 3.3]{HOS09}) to have
\begin{equation} \label{eq:stability_bound}
\Bigl\|\prod_{j=0}^{n-k} \ee^{H \hat{J}_{n-j}} \Bigr\| \leq C_\text{\rm S},\qquad t_0\le t_k \le t_n\le T.
\end{equation}
The importance of this bound is that the constant $C_\text{\rm S}$ is uniform in $k$ and $n$, despite the fact that $J_n$ varies from step to step.

\subsubsection{A global error representation of MERB methods}\label{error_expression}
Since MERB methods  \eqref{eq:MERBa}--\eqref{eq:MERBb} result in  a numerical solution $\hat{u}_{n+1}$ which approximates the numerical solution  $u_{n+1}$ of ExpRB methods (as denoted above) at time $t_{n+1}$, we will employ the local errors of ExpRB methods to analyze the global error of MERB methods.  Throughout the paper the following error notations will be used.

$\bullet$ \emph{Global error notation for MERB methods}. We denote the global error at time $t_{n+1}$ of a MERB method as
\begin{equation} \label{eq:error}
\hat{e}_{n+1} = \hat{u}_{n+1} - u(t_{n+1}).
\end{equation}

$\bullet$ \emph{Local error notation for ExpRB methods}.
We denote the local error at $t_{n+1}$ of the base ExpRB method as
\begin{equation} \label{eq:local_error}
 \tilde{e}_{n+1} = \tilde{u}_{n+1} - u(t_{n+1})
 \end{equation}
Here, $\tilde{u}_{n+1}$ is the numerical solution of the base ExpRB method obtained after carrying out one step of \eqref{eq2.6} starting from the exact solution $u(t_n)$ as the initial value, i.e.,
\begin{subequations} \label{eq2.22}
\begin{align}
\tilde{u}_{n+1} = \ee^{H  \tilde{J}_{n}} u(t_n)  + & H \varphi_1 (H  \tilde{J}_{n}) \tilde{V}_{n} t_n+H^2 \varphi_2 (H J_n) \tilde{V}_{n}  \label{eq2.22a} \\
& + H \sum_{i=1}^{s}b_{i}(H  \tilde{J}_{n}) \tilde{N}_n(t_n +c_i H,\tilde{U}_{ni}), \notag \\
\tilde{U}_{ni} = \ee^{c_i H \tilde{J}_{n}}u(t_n)  &+ c_i H \varphi_1 (c_i H  \tilde{J}_{n}) \tilde{V}_{n} t_n+c^2_i H^2 \varphi_2 (c_i H  \tilde{J}_{n}) \tilde{V}_{n}   \label{eq2.22b}  \\
&+ H \sum_{j=1}^{i-1}a_{ij}(H  \tilde{J}_{n}) \tilde{N}_n(t_n +c_j H, \tilde{U}_{nj}),  \notag
\end{align}
\end{subequations}
where
\begin{equation}  \label{eq:JVN_tilde}
\tilde{J}_{n}=\frac{\partial F} {\partial u}(t_n, u(t_n)), \ \tilde{V}_{n}  =\frac{\partial F} {\partial t}(t_n,u(t_n)),  \  \tilde{N}_n (t, u)= F(t,u)- \tilde{J}_{n} u - \tilde{V}_{n} t.
\end{equation}
Note that from Lemma~\ref{Lemma2.1}, \eqref{eq2.22} is equivalent to one step of the MERB scheme starting from the exact initial value $y_{n+1}(0)= u(t_n)$ (for which the solution of the IVP \eqref{eq:MERBb} on $[0, H]$ is ``known'' to be $y_{n+1}(H) = \tilde{u}_{n+1}$). Therefore, one can  consider that $ \tilde{e}_{n+1}$ is also the local error of MERB methods.

$\bullet$ \emph{Global error notation for approximation of the IVP \eqref{eq:MERBb}}.
As $ \hat{u}_{n+1} \approx y_{n+1}(H)$ (the true solution of the ODE \eqref{eq:MERBb}), we denote the global error of an ODE solver used for integrating \eqref{eq:MERBb} on $[0, H]$ as
\begin{equation}  \label{eq2.25}
  \hat{\varepsilon}_{n+1} =\hat{u}_{n+1} -y_{n+1} (H).
\end{equation}

$\bullet$ \emph{Global error notation for approximation of the IVP \eqref{eq:MERBa}}.
Similarly, since $ \hat{U}_{ni}$ is the numerical solution of \eqref{eq:MERBa} on $[0, c_i H]$ obtained by an ODE solver, let us denote the global error of this approximation as
 \begin{equation}\label{eq2.25b}
\hat{\varepsilon}_{ni} =\hat{U}_{ni} - y_{ni} (c_i H).
 \end{equation}

Note that by applying the variation-of-constants formula to \eqref{eq:MERBb}
and using \eqref{eq:coeff_b}, $y_{n+1}(H)$ can be represented as
\begin{equation} \label{eq2.26}
\begin{aligned}
  y_{n+1}(H)= \ee^{H  \hat{J}_{n}} \hat{u}_n  & + H \varphi_1 (H  \hat{J}_{n}) \hat{V}_{n} t_n+H^2 \varphi_2 (H J_n) \hat{V}_{n} \\
  & + H \sum_{i=1}^{s}b_{i}(H \hat{J}_{n}) \hat{N}_n(t_n +c_i H, \hat{U}_{ni}).
\end{aligned}
\end{equation}
In view of  \eqref{eq:error}, \eqref{eq:local_error},  and \eqref{eq2.25}, we can write
\begin{equation}  \label{eq2.27}
  \hat{e}_{n+1} = \hat{u}_{n+1} - \tilde{u}_{n+1} +  \tilde{e}_{n+1} = \hat{\varepsilon}_{n+1} + (y_{n+1} (H)  - \tilde{u}_{n+1}) +  \tilde{e}_{n+1},
\end{equation}
i.e., the global error arising from the MERB scheme can be written as the sum of the global error of the ODE solver used for the IVP \eqref{eq:MERBb}, the difference between
the true solution to the IVP \eqref{eq:MERBb} and the numerical solution obtained by the base ExpRB method \eqref{eq2.22},
and the local error arising from this ExpRB-based method.

To keep our presentation in a compact form, we introduce
\begin{subequations} \label{eq:Bterms}
\begin{align}
t_{ni} & = t_n +c_i H,  \label{eq:tni}\\
\hat{B}_n  &= \varphi_1 (H  \hat{J}_{n}) \hat{V}_{n} t_n+H \varphi_2 (H \hat{J}_n) \hat{V}_{n} +  \sum_{i=1}^{s}b_{i}(H \hat{J}_{n}) \hat{N}_n(t_{ni}, \hat{U}_{ni}),   \label{eq:B_hat}\\
\tilde{B}_n  & = \varphi_1 (H  \tilde{J}_{n}) \tilde{V}_{n} t_n+H \varphi_2 (H \tilde{J}_n) \tilde{V}_{n}   +\sum_{i=1}^{s}b_{i}(H  \tilde{J}_{n}) \tilde{N}_n(t_{ni},\tilde{U}_{ni}). \label{eq:B_tilde}
\end{align}
\end{subequations}
Using \eqref{eq:Bterms}, we now derive  a full expansion of \eqref{eq2.27}, which later tells us how the global error of MERB methods can be estimated by the sum of the propagated local errors of ExpRB methods and the global errors of the ODE solvers used for integrating \eqref{eq:MERBa} and \eqref{eq:MERBb}.
\begin{theorem}\label{Theorem2.4}
The global error  $\hat{e}_{n+1}$  of MERB methods  \eqref{eq:MERBa}--\eqref{eq:MERBb} at time $t_{n+1}$ can be expressed as
 \begin{equation}  \label{eq:error_terms}
 \begin{aligned}
 \hat{e}_{n+1} &
        = \underbrace{\Big(\prod_{j=0}^{n}  \ee^{H  \hat{J}_{n-j}} - \prod_{j=0}^{n}  \ee^{H  \tilde{J}_{n-j}}\Big) u_0}_{Error1}
       + \underbrace{\sum_{k=0}^{n}  \Big(\prod_{j=0}^{n-k-1}  \ee^{H   \tilde{J}_{n-j}} \Big)  \tilde{e}_{k+1} }_{Error2} \\
  &  +  \underbrace{\sum_{k=0}^{n}  \Big(\prod_{j=0}^{\mathsmaller{n-k-1}}  \ee^{H  \hat{J}_{n-j}} \Big)  \hat{\varepsilon}_{k+1}}_{Error3}
      +  \underbrace{H \sum_{k=0}^{n}  \bigg[\Big(\prod_{j=0}^{\mathsmaller{n-k-1}}  \ee^{H   \hat{J}_{n-j}} \Big) \hat{B}_k  - \Big(\prod_{j=0}^{\mathsmaller{n-k-1}}  \ee^{H   \tilde{J}_{n-j}} \Big)  \tilde{B}_k \bigg]}_{Error4}.
 \end{aligned}
 \end{equation}
\end{theorem}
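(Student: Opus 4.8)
The plan is to reduce the theorem to solving two linear difference equations — one for the MERB iterates $\hat{u}_n$ and one for the exact values $u(t_n)$ — and then to subtract the resulting closed-form (telescoped) expressions. The starting point is the decomposition \eqref{eq2.27} together with the representations already assembled. From \eqref{eq2.26} and \eqref{eq:B_hat}, one macro-step of the MERB scheme reads $y_{n+1}(H) = \ee^{H\hat{J}_n}\hat{u}_n + H\hat{B}_n$, so the defining relation $\hat{\varepsilon}_{n+1}=\hat{u}_{n+1}-y_{n+1}(H)$ of \eqref{eq2.25} yields the one-step recursion
\[
  \hat{u}_{n+1} = \ee^{H\hat{J}_n}\hat{u}_n + H\hat{B}_n + \hat{\varepsilon}_{n+1}.
\]
Likewise, reading \eqref{eq2.22a} through \eqref{eq:B_tilde} gives $\tilde{u}_{n+1} = \ee^{H\tilde{J}_n}u(t_n) + H\tilde{B}_n$, and since $\tilde{e}_{n+1} = \tilde{u}_{n+1} - u(t_{n+1})$ by \eqref{eq:local_error}, the exact solution obeys the companion recursion
\[
  u(t_{n+1}) = \ee^{H\tilde{J}_n}u(t_n) + H\tilde{B}_n - \tilde{e}_{n+1}.
\]

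Next I would solve each recursion in closed form by the discrete variation-of-constants formula (products of the step propagators). Using $\hat{u}_0 = u(t_0) = u_0$ as the common initial value, and the convention that an empty product equals the identity, unrolling the MERB recursion gives
\[
  \hat{u}_{n+1} = \Big(\prod_{j=0}^n \ee^{H\hat{J}_{n-j}}\Big)u_0 + \sum_{k=0}^n\Big(\prod_{j=0}^{n-k-1}\ee^{H\hat{J}_{n-j}}\Big)\big(H\hat{B}_k + \hat{\varepsilon}_{k+1}\big),
\]
with the analogous expression for $u(t_{n+1})$ obtained by the substitutions $\hat{J}\to\tilde{J}$, $\hat{B}_k\to\tilde{B}_k$, and $\hat{\varepsilon}_{k+1}\to-\tilde{e}_{k+1}$. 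I would verify both closed forms by a short induction on $n$, tracking the index ranges so that $\prod_{j=0}^{n-k-1}\ee^{H\hat{J}_{n-j}}$ collects exactly the factors $\ee^{H\hat{J}_n},\ldots,\ee^{H\hat{J}_{k+1}}$ and reduces to the identity when $k=n$. The point worth emphasizing is that $\hat{B}_k,\hat{\varepsilon}_{k+1}$ (respectively $\tilde{B}_k,\tilde{e}_{k+1}$) are treated as \emph{given} inhomogeneities when unrolling, even though they implicitly depend on the corresponding trajectory; this is legitimate because any sequence satisfying $x_{n+1}=A_n x_n+f_n$ admits the above representation irrespective of how $f_n$ is generated, and it is exactly what makes \eqref{eq:error_terms} an exact identity rather than an estimate.

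Subtracting the closed form for $u(t_{n+1})$ from that for $\hat{u}_{n+1}$ then produces \eqref{eq:error_terms} termwise: the difference of the two leading propagator products acting on $u_0$ is precisely $Error1$; the propagated $\hat{\varepsilon}_{k+1}$ and $\tilde{e}_{k+1}$ contributions are $Error3$ and $Error2$, respectively (the sign of the latter flipping because it entered $u(t_{n+1})$ with a minus); and the residual $H\hat{B}_k$ and $H\tilde{B}_k$ sums combine into $Error4$. I do not anticipate a genuine obstacle, since the argument is exact algebra rather than analysis and no cross-cancellation between the two recursions is required — the four groups appear directly. The only places demanding care are (i) correctly identifying the exact-solution recursion, i.e.\ recognizing via Lemma~\ref{Lemma2.1} that $\tilde{u}_{n+1}$ is exactly one MERB/ExpRB step taken from the exact value $u(t_n)$, so that its deviation from $u(t_{n+1})$ is the local error $\tilde{e}_{n+1}$; and (ii) the product-index bookkeeping and empty-product conventions, which must be tracked consistently so that the $\hat{J}$- and $\tilde{J}$-propagators land on the correct summands. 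Because the identity is purely algebraic, it holds before any of the stability or smoothness bounds of Section~\ref{framework} are invoked, and those bounds are reserved for estimating the four groups afterward.
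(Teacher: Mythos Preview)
Your proposal is correct and follows essentially the same approach as the paper. The only cosmetic difference is that the paper unrolls the recursions for $y_{n+1}(H)$ and $\tilde{u}_{n+1}$ (obtaining sums for $\hat{\varepsilon}_{k+1}$ and $\tilde{e}_{k+1}$ over $k=0,\ldots,n-1$) and then invokes \eqref{eq2.27} to supply the missing $k=n$ terms, whereas you unroll $\hat{u}_{n+1}$ and $u(t_{n+1})$ directly so that those final terms are already absorbed into the sums; the resulting identity and the underlying algebra are identical.
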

\begin{proof}
In view of \eqref{eq2.27}, we first study the difference $(y_{n+1} (H)  - \tilde{u}_{n+1})$.
Using \eqref{eq:B_hat} and \eqref{eq2.25} (which implies $\hat{u}_n = y_n (H) +  \hat{\varepsilon}_n $), we have
\begin{equation} \label{eq2.32}
  y_{n+1}(H)= \ee^{H  \hat{J}_{n}} \hat{u}_n  + H \hat{B}_n =  \ee^{H  \hat{J}_{n}} y_n (H) +  \ee^{H  \hat{J}_{n}} \hat{\varepsilon}_n  + H \hat{B}_n.
\end{equation}
Solving this recurrence relation (with note that $y_0 (H) = u(t_0) =u_0$) gives
\begin{equation} \label{eq2.33}
  y_{n+1}(H)=  \Big(\prod_{j=0}^{n}  \ee^{H  \hat{J}_{n-j}} \Big) u_0  + \sum_{k=0}^{n-1}  \Big(\prod_{j=0}^{n-k-1}  \ee^{H  \hat{J}_{n-j}} \Big)  \hat{\varepsilon}_{k+1} +H \sum_{k=0}^{n}  \Big(\prod_{j=0}^{n-k-1} \ee^{H   \hat{J}_{n-j}} \Big) \hat{B}_k .
\end{equation}
Similarly, using \eqref{eq:B_tilde} and \eqref{eq:local_error} (which implies $u(t_n) = \tilde{u}_n -  \tilde{e}_n $), we can write $\tilde{u}_{n+1}$ in \eqref{eq2.22a} as
\begin{equation} \label{eq2.34}
\tilde{u}_{n+1} =  \ee^{H   \tilde{J}_{n}}  u(t_n)  + H  \tilde{B}_n =  \ee^{H   \tilde{J}_{n}} \tilde{u}_n -   \ee^{H   \tilde{J}_{n}} \tilde{e}_n    + H  \tilde{B}_n.
\end{equation}
After solving this recurrence, we end up with
\begin{equation} \label{eq2.35}
\tilde{u}_{n+1}  =  \Big(\prod_{j=0}^{n}  \ee^{H  \tilde{J}_{n-j}} \Big) u_0  -  \sum_{k=0}^{n-1}  \Big(\prod_{j=0}^{n-k-1}  \ee^{H  \tilde{J}_{n-j}} \Big)   \tilde{e}_{k+1} + H \sum_{k=0}^{n}  \Big(\prod_{j=0}^{n-k-1}  \ee^{H    \tilde{J}_{n-j}} \Big)  \tilde{B}_k .
\end{equation}
Subtracting  \eqref{eq2.35} from \eqref{eq2.33} gives  $(y_{n+1} (H)  - \tilde{u}_{n+1})$ and inserting this into \eqref{eq2.27} proves \eqref{eq:error_terms}.
\end{proof}
Next, in order to estimate  the global error $ \hat{e}_{n+1}$, we prove some preliminary results.
\subsubsection{Preliminary results and error bounds}\label{pre_results}
\begin{lemma}\label{Lemma2.2}
The term Error4  in \eqref{eq:error_terms} can be further expressed as
\begin{equation} \label{eq:Error4}
Error4 = H \sum_{k=0}^{n}  \bigg[\Big(\prod_{j=0}^{n-k-1}  \ee^{H   \hat{J}_{n-j}} - \prod_{j=0}^{n-k-1}  \ee^{H   \tilde{J}_{n-j}} \Big) \hat{B}_k + \Big(\prod_{j=0}^{n-k-1}  \ee^{H   \tilde{J}_{n-j}} \Big)  (\hat{B}_k - \tilde{B}_k ) \bigg],
\end{equation}
where
\begin{equation} \label{eq:diffB}
\begin{aligned}
\hat{B}_k - \tilde{B}_k &= \sum_{j=1}^{2}  \big[\big(\varphi_j (H  \hat{J}_{k}) -  \varphi_j (H  \tilde{J}_{k})\big) \tilde{V}_{k} + \varphi_j (H  \hat{J}_{k}) ( \hat{V}_k - \tilde{V}_k ) \big] t^{2-j}_k H^{j-1} \\
  & +   \sum_{i=1}^{s} \big( b_{i}(H \hat{J}_{k}) - b_{i}(H  \tilde{J}_{k}) \big) \tilde{N}_k (t_{ki},\tilde{U}_{ki}) \\
  & +   \sum_{i=1}^{s}  b_{i}(H  \hat{J}_{k}) \big( \hat{N}_k (t_{ki}, \hat{U}_{ki}) - \tilde{N}_k (t_{ki},\tilde{U}_{ki})  \big).
\end{aligned}
\end{equation}
\end{lemma}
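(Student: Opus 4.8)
The plan is to recognize that both displayed equations reduce to a single elementary add-and-subtract identity applied in two different places, so the proof is pure algebraic bookkeeping with no analytic content. The identity is that for any operators $P,Q$ and vectors $a,b$,
$Pa-Qb=(P-Q)a+Q(a-b)=(P-Q)b+P(a-b)$,
each equality being immediate upon expanding the right-hand side.

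To establish \eqref{eq:Error4} I would apply the first form to the summand of $Error4$ in \eqref{eq:error_terms}, taking $P=\prod_{j=0}^{n-k-1}\ee^{H\hat{J}_{n-j}}$, $Q=\prod_{j=0}^{n-k-1}\ee^{H\tilde{J}_{n-j}}$, $a=\hat{B}_k$, and $b=\tilde{B}_k$. This rewrites each summand as $(P-Q)\hat{B}_k+Q(\hat{B}_k-\tilde{B}_k)$, which is exactly \eqref{eq:Error4}.

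To obtain \eqref{eq:diffB} I would subtract $\tilde{B}_k$ in \eqref{eq:B_tilde} from $\hat{B}_k$ in \eqref{eq:B_hat} term by term and apply the second form of the identity to each resulting pair. For the two $\varphi$-terms I group them as $\varphi_j(H\hat{J}_k)\hat{V}_k\,t_k^{2-j}H^{j-1}-\varphi_j(H\tilde{J}_k)\tilde{V}_k\,t_k^{2-j}H^{j-1}$ for $j=1,2$ (so the scalar weight is $t_k$ when $j=1$ and $H$ when $j=2$) and apply the identity with $P=\varphi_j(H\hat{J}_k)$, $Q=\varphi_j(H\tilde{J}_k)$, $a=\hat{V}_k$, $b=\tilde{V}_k$; factoring out the common weight $t_k^{2-j}H^{j-1}$ then gives the first line of \eqref{eq:diffB}. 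Applying the identity once more to each of the $s$ differences $b_i(H\hat{J}_k)\hat{N}_k(t_{ki},\hat{U}_{ki})-b_i(H\tilde{J}_k)\tilde{N}_k(t_{ki},\tilde{U}_{ki})$, with $P=b_i(H\hat{J}_k)$ and $Q=b_i(H\tilde{J}_k)$, yields the remaining two lines.

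There is no genuine obstacle; the only point requiring care is the deliberate use of two \emph{different} versions of the splitting identity, and this choice is dictated entirely by the estimates to come rather than by any necessity. In \eqref{eq:Error4} the hat quantity $\hat{B}_k$ is pulled into the first summand so that the difference of exponential products $(P-Q)$ can later be controlled while the bounded product $Q$ multiplies $\hat{B}_k-\tilde{B}_k$; in \eqref{eq:diffB} the tilde data $\tilde{V}_k$ and $\tilde{N}_k(t_{ki},\tilde{U}_{ki})$ are instead pulled out so that they may be bounded via smoothness along the exact solution (Assumption 2), while the hat operators, bounded through Assumption 1, multiply the differences $\hat{V}_k-\tilde{V}_k$ and $\hat{N}_k-\tilde{N}_k$. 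These placements are precisely what will make the subsequent application of the stability bound \eqref{eq:stability_bound} together with the smoothness estimates straightforward.
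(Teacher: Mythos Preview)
Your proposal is correct and follows essentially the same approach as the paper: the paper's proof simply says to add and subtract $\prod_{j=0}^{n-k-1}\ee^{H\tilde{J}_{n-j}}\hat{B}_k$ inside the sum for \eqref{eq:Error4} and to subtract \eqref{eq:B_tilde} from \eqref{eq:B_hat} for \eqref{eq:diffB}, which is exactly your add-and-subtract identity applied in its two forms. Your additional commentary on \emph{why} the two different splittings are chosen (to set up later use of the stability bound and Assumption~2) is a nice elaboration beyond the paper's terse treatment, but the algebraic content is identical.
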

\begin{proof}
The derivation of \eqref{eq:Error4} is straightforward by subtracting and adding the same term $\prod_{j=0}^{n-k-1}  \ee^{H   \tilde{J}_{n-j}}  \hat{B}_k $ within the sum $ \sum_{k=0}^{n}  \big[ \cdot \big]$ in \emph{Error4}. Also, by subtracting \eqref{eq:B_tilde} from  \eqref{eq:B_hat}, one can easily obtain \eqref{eq:diffB}.
\end{proof}
To estimate the difference in the nonlinear terms at each internal MERB and ExpRB stage, $\big( \hat{N}_k (t_{ki}, \hat{U}_{ki}) - \tilde{N}_k (t_{ki},\tilde{U}_{ki})  \big)$ in \eqref{eq:diffB}, we first study the difference
\begin{equation} \label{eq:E_hat}
\hat{E}_{ni} = \hat{U}_{ni} -  \tilde{U}_{ni}.
\end{equation}
Denoting
\begin{subequations} \label{eq:Aterms}
\begin{align}
\hat{A}_{ni}  &= c_i \varphi_1 (c_i H  \hat{J}_{n}) \hat{V}_{n} t_n+ c^2_i H \varphi_2 (c_i H J_n) \hat{V}_{n} +  \sum_{j=1}^{i-1}a_{ij}(H \hat{J}_{n}) \hat{N}_n(t_{nj}, \hat{U}_{nj}),   \label{eq:A_hat}\\
 \tilde{A}_{ni}  &= c_i \varphi_1 (c_i H  \tilde{J}_{n})  \tilde{V}_{n} t_n+ c^2_i H \varphi_2 (c_i H J_n)  \tilde{V}_{n} +  \sum_{j=1}^{i-1}a_{ij}(H  \tilde{J}_{n})  \tilde{N}_n(t_{nj},  \tilde{U}_{nj}), \label{eq:A_tilde}
\end{align}
\end{subequations}
we obtain the following result.
\begin{lemma}\label{Lemma2.3}
 The difference  between  $\hat{U}_{ni}$  and  $\tilde{U}_{ni}$ can be expressed as
\begin{equation} \label{eq:E_hat_result}
 \hat{E}_{ni} = \hat{\varepsilon}_{ni} +    \ee^{c_i H   \hat{J}_n} \hat{e}_n + \big( \ee^{c_i H   \hat{J}_n} -  \ee^{c_i H   \tilde{J}_n} \big)u(t_n) +  H (\hat{A}_{ni} -   \tilde{A}_{ni})
 \end{equation}
 with
 \begin{equation} \label{eq:diffA}
\begin{aligned}
\hat{A}_{ni} - \tilde{A}_{ni} &= \sum_{\ell=1}^{2}  \big[\big(\varphi_{\ell} (c_i H  \hat{J}_{n}) -  \varphi_{\ell} (c_i H  \tilde{J}_{n})\big) \tilde{V}_{n} + \varphi_{\ell} (c_i H  \hat{J}_{n}) ( \hat{V}_n - \tilde{V}_n ) \big] c^{\ell}_i t^{2-{\ell} }_n H^{{\ell} -1}  \\
  & +   \sum_{j=1}^{i-1} \big( a_{ij
  }(H \hat{J}_{n}) - a_{ij}(H  \tilde{J}_{n}) \big) \tilde{N}_n (t_{nj},\tilde{U}_{nj}) \\
  & +  \sum_{j=1}^{i-1}  a_{ij}(H \hat{J}_{n}) \big( \hat{N}_n (t_{nj}, \hat{U}_{nj}) - \tilde{N}_n (t_{nj},\tilde{U}_{nj})  \big).
\end{aligned}
\end{equation}
Here, $\hat{\varepsilon}_{n1} =\hat{U}_{n1} - y_{n1} (c_1 H) = \hat{u}_n - y_{n1} (0) =0$  (due to $c_1 =0$) and thus $\hat{E}_{n1} = \hat{e}_n $.
\end{lemma}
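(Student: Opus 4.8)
The plan is to prove \eqref{eq:E_hat_result} by writing both $\hat{U}_{ni}$ and $\tilde{U}_{ni}$ as a semigroup acting on their respective starting values plus an $\mathcal{O}(H)$ remainder, and then subtracting. First I would represent the MERB stage. Applying the variation-of-constants formula to the fast IVP \eqref{eq:MERBa} and inserting the integral form \eqref{eq:coeff_a} of the coefficient functions --- the very same manipulation that produced \eqref{eq2.26} for the final update, which also reuses the increments $\hat{N}_n$ from \eqref{eq:D_hat}, the convention $c_1=0$, and the (stage-level) row-sum consistency identity to convert the $\widehat{D}_{nj}$ into the $\hat{N}_n$ evaluations --- gives $y_{ni}(c_i H) = \ee^{c_i H \hat{J}_n}\hat{u}_n + H\hat{A}_{ni}$ with $\hat{A}_{ni}$ as in \eqref{eq:A_hat}. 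Combining this with $\hat{\varepsilon}_{ni}=\hat{U}_{ni}-y_{ni}(c_iH)$ from \eqref{eq2.25b} yields $\hat{U}_{ni} = \ee^{c_i H \hat{J}_n}\hat{u}_n + H\hat{A}_{ni} + \hat{\varepsilon}_{ni}$. In parallel, \eqref{eq2.22b} together with the definition \eqref{eq:A_tilde} shows directly that $\tilde{U}_{ni} = \ee^{c_i H \tilde{J}_n}u(t_n) + H\tilde{A}_{ni}$.

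Next I would subtract the two representations to obtain
\[
\hat{E}_{ni} = \hat{\varepsilon}_{ni} + \big(\ee^{c_i H\hat{J}_n}\hat{u}_n - \ee^{c_i H\tilde{J}_n}u(t_n)\big) + H(\hat{A}_{ni} - \tilde{A}_{ni}).
\]
The only term needing rewriting is the semigroup difference. Inserting $\hat{u}_n = u(t_n) + \hat{e}_n$ (from \eqref{eq:error}) and adding and subtracting $\ee^{c_i H\hat{J}_n}u(t_n)$ splits it into $\ee^{c_i H\hat{J}_n}\hat{e}_n + \big(\ee^{c_i H\hat{J}_n} - \ee^{c_i H\tilde{J}_n}\big)u(t_n)$, which is precisely \eqref{eq:E_hat_result}.

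To establish \eqref{eq:diffA}, I would subtract \eqref{eq:A_tilde} from \eqref{eq:A_hat} and treat each group by the same add-and-subtract device used in Lemma~\ref{Lemma2.2}: for each $\varphi_\ell$ term write $\varphi_\ell(c_iH\hat{J}_n)\hat{V}_n - \varphi_\ell(c_iH\tilde{J}_n)\tilde{V}_n = \varphi_\ell(c_iH\hat{J}_n)(\hat{V}_n - \tilde{V}_n) + (\varphi_\ell(c_iH\hat{J}_n) - \varphi_\ell(c_iH\tilde{J}_n))\tilde{V}_n$ and collect the scalar factors into $c_i^\ell t_n^{2-\ell}H^{\ell-1}$; for the quadrature sum, insert the intermediate term $a_{ij}(H\hat{J}_n)\tilde{N}_n(t_{nj},\tilde{U}_{nj})$ to separate the coefficient difference $\big(a_{ij}(H\hat{J}_n) - a_{ij}(H\tilde{J}_n)\big)\tilde{N}_n(t_{nj},\tilde{U}_{nj})$ from the nonlinear difference $a_{ij}(H\hat{J}_n)\big(\hat{N}_n - \tilde{N}_n\big)$. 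The stated special case then follows from $c_1=0$: by the convention $\hat{U}_{n1}=y_{n1}(0)=\hat{u}_n$ and $\tilde{U}_{n1}=u(t_n)$, the fast solver introduces no error, so $\hat{\varepsilon}_{n1}=0$ and $\hat{E}_{n1}=\hat{u}_n - u(t_n)=\hat{e}_n$.

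I do not expect any genuine analytical difficulty here, since the result is an exact algebraic identity. The one step demanding care is the first: deriving the $\hat{A}_{ni}$-form of $y_{ni}(c_iH)$, because it silently reuses the consistency manipulation that the text carried out explicitly only for the final update in \eqref{eq2.26} and \eqref{eq2.32}. Once that stage representation is in hand, the remainder reduces entirely to the bookkeeping already developed for Theorem~\ref{Theorem2.4} and Lemma~\ref{Lemma2.2}.
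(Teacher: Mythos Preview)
Your proposal is correct and follows essentially the same route as the paper: derive $y_{ni}(c_iH) = \ee^{c_iH\hat{J}_n}\hat{u}_n + H\hat{A}_{ni}$ via variation-of-constants and \eqref{eq:coeff_a}, write $\tilde{U}_{ni} = \ee^{c_iH\tilde{J}_n}u(t_n) + H\tilde{A}_{ni}$ from \eqref{eq2.22b}, insert $\hat{u}_n = u(t_n)+\hat{e}_n$, subtract, and then obtain \eqref{eq:diffA} by the same add-and-subtract device as in Lemma~\ref{Lemma2.2}. Your explicit mention of the stage-level row-sum identity needed to pass from the $\widehat{D}_{nj}$-form of $\hat{p}_{ni}$ to the $\hat{N}_n$-form in $\hat{A}_{ni}$ is a detail the paper leaves implicit.
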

\begin{proof}
From \eqref{eq:E_hat} and \eqref{eq2.25b}, we have
 \begin{equation} \label{eq2.42}
 \hat{E}_{ni} =  \hat{\varepsilon}_{ni} + y_{ni}(c_i H) -  \tilde{U}_{ni}.
 \end{equation}
Using \eqref{eq:A_tilde}, one can write $\tilde{U}_{ni}$ given in \eqref{eq2.22b} as
\begin{equation} \label{eq2.43}
\tilde{U}_{ni} = \ee^{c_i H  \tilde{J}_{n}} u(t_n)  +  H \tilde{A}_{ni}.
\end{equation}
By  applying the variation-of-constants formula to \eqref{eq:MERBa}
and using \eqref{eq:coeff_a},
\begin{equation} \label{eq2.44}
y_{ni}(c_i H)= \ee^{c_i H  \hat{J}_{n}} \hat{u}_n  +  H \hat{A}_{ni} =  \ee^{c_i H  \hat{J}_{n}} (\hat{e}_n + u(t_n)) +  H \hat{A}_{ni},
\end{equation}
where $\hat{A}_{ni}$ is given in  \eqref{eq:A_hat}.
Inserting \eqref{eq2.43} and \eqref{eq2.44} into \eqref{eq2.42} gives \eqref{eq:E_hat_result}.
Similarly to \eqref{eq:diffB}, the expression \eqref{eq:diffA} can be verified by subtracting \eqref{eq:A_tilde} from
\eqref{eq:A_hat} first and then adding and subtracting to the result the same terms $c_i \varphi_1 (c_i H  \hat{J}_{n})  \tilde{V}_{n} t_n$, $c^2_i H \varphi_2 (c_i H  \hat{J}_{n})  \tilde{V}_{n} $, and  $\sum_{j=1}^{i-1}  a_{ij}(H \hat{J}_{n})  \tilde{N}_n (t_n +c_j H,\tilde{U}_{nj})$.
\end{proof}
Next, we prove several bounds needed to estimate the terms in \eqref{eq:diffB} and \eqref{eq:diffA}. To simplify our presentation within both this and the following subsections, we will use $C$ as a generic constant that may have different values at each occurrence.
\begin{lemma}\label{Lemma2.4}
Under Assumption~2, the bound
\begin{equation} \label{eq:bound_diff_N}
\|\hat{N}_n (t_{ni}, \hat{U}_{ni}) - \tilde{N}_n (t_{ni},\tilde{U}_{ni}) \| \leqslant C \|\hat{E}_{ni}\| + C \|\hat{E}_{ni}\| ^2 +  C\| \hat{J}_n - \tilde{J}_n\|+ C\| \hat{V}_n - \tilde{V}_n\|
\end{equation}
holds for all $n$ and $i$  as long as $\hat{E}_{ni}$ remains in a sufficiently small neighborhood of $0$.
\end{lemma}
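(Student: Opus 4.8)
The plan is to estimate the difference by inserting the intermediate quantity $\hat{N}_n(t_{ni},\tilde{U}_{ni})$, writing
\[
\hat{N}_n(t_{ni}, \hat{U}_{ni}) - \tilde{N}_n(t_{ni},\tilde{U}_{ni}) = \big[\hat{N}_n(t_{ni}, \hat{U}_{ni}) - \hat{N}_n(t_{ni}, \tilde{U}_{ni})\big] + \big[\hat{N}_n(t_{ni}, \tilde{U}_{ni}) - \tilde{N}_n(t_{ni},\tilde{U}_{ni})\big],
\]
and bounding the two brackets separately. The point of this split is that it isolates the dependence on the stage difference $\hat{E}_{ni}$ (first bracket) from the dependence on the mismatch between the two linearizations, $\hat{J}_n-\tilde{J}_n$ and $\hat{V}_n-\tilde{V}_n$ (second bracket), each of which maps directly onto a term on the right-hand side of \eqref{eq:bound_diff_N}.

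For the first bracket, I would use the definition $\hat{N}_n(t,u) = F(t,u) - \hat{J}_n u - \hat{V}_n t$ from \eqref{eq:JVN_hat}, so that the $\hat{V}_n t_{ni}$ contributions cancel and the bracket reduces to $F(t_{ni}, \hat{U}_{ni}) - F(t_{ni}, \tilde{U}_{ni}) - \hat{J}_n \hat{E}_{ni}$. A first-order Taylor expansion of $F$ in its second argument about $\tilde{U}_{ni}$ then produces $\frac{\partial F}{\partial u}(t_{ni}, \tilde{U}_{ni}) \hat{E}_{ni}$ plus a remainder that is $\mathcal{O}(\|\hat{E}_{ni}\|^2)$, the quadratic remainder bound following from the uniform boundedness of the second Fréchet derivative of $F$ in Assumption~2. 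Since both $\frac{\partial F}{\partial u}(t_{ni}, \tilde{U}_{ni})$ and $\hat{J}_n$ are bounded operators, the leading term $[\frac{\partial F}{\partial u}(t_{ni}, \tilde{U}_{ni}) - \hat{J}_n]\hat{E}_{ni}$ is controlled by $C\|\hat{E}_{ni}\|$ (no smallness of the coefficient is needed), which together with the remainder gives the $C\|\hat{E}_{ni}\| + C\|\hat{E}_{ni}\|^2$ contribution.

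For the second bracket, the $F(t_{ni}, \tilde{U}_{ni})$ evaluations cancel exactly, since $\hat{N}_n$ and $\tilde{N}_n$ differ only in their subtracted linear parts; what remains is $-(\hat{J}_n - \tilde{J}_n)\tilde{U}_{ni} - (\hat{V}_n - \tilde{V}_n)t_{ni}$. Using $t_{ni} = t_n + c_i H \le T$ from \eqref{eq:tni} and the fact that $\tilde{U}_{ni}$ stays within a bounded neighborhood of the exact value $u(t_{ni})$ under Assumption~2, so that $\|\tilde{U}_{ni}\| \le C$, this bracket is bounded by $C\|\hat{J}_n - \tilde{J}_n\| + C\|\hat{V}_n - \tilde{V}_n\|$. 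Summing the two estimates then yields \eqref{eq:bound_diff_N}.

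The main obstacle is the careful justification of the Taylor remainder: one must guarantee that both $\hat{U}_{ni}$ and $\tilde{U}_{ni}$, and hence the whole segment joining them, lie within the strip along the exact solution on which $F$ is Fréchet differentiable with uniformly bounded derivatives. This is precisely where the stated hypothesis that $\hat{E}_{ni}$ remains in a sufficiently small neighborhood of $0$ is used. Once this is in place, the boundedness of the relevant Fréchet derivatives renders the remaining estimates routine.
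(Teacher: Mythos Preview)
Your proposal is correct and follows essentially the same route as the paper: the same insertion of the intermediate $\hat{N}_n(t_{ni},\tilde{U}_{ni})$, the same exact cancellation in the second bracket yielding $(\tilde{J}_n-\hat{J}_n)\tilde{U}_{ni}+(\tilde{V}_n-\hat{V}_n)t_{ni}$, and the same Taylor expansion with quadratic remainder for the first bracket (the paper expands $\hat{N}_n$ directly, which amounts to exactly your $[\tfrac{\partial F}{\partial u}(t_{ni},\tilde{U}_{ni})-\hat{J}_n]\hat{E}_{ni}$ since $\tfrac{\partial \hat{N}_n}{\partial u}=\tfrac{\partial F}{\partial u}-\hat{J}_n$).
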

\begin{proof}
First, we split
\small
\begin{equation} \label{eq2.46} \notag
\hat{N}_n (t_{ni}, \hat{U}_{ni}) - \tilde{N}_n (t_{ni},\tilde{U}_{ni})
= \underbrace{\hat{N}_n (t_{ni}, \hat{U}_{ni}) - \hat{N}_n (t_{ni},\tilde{U}_{ni})}_{Nsplit1}
+  \underbrace{\hat{N}_n (t_{ni},\tilde{U}_{ni}) -  \tilde{N}_n (t_{ni},\tilde{U}_{ni})}_{Nsplit2}.
\end{equation}
\normalsize
Using \eqref{eq:JVN_hat} and \eqref{eq:JVN_tilde}, we write the term \emph{Nsplit2} as
\begin{equation} \label{eq2.47}
\begin{aligned}
Nsplit2 &= \big(F(t_{ni}, \tilde{U}_{ni}) - \hat{J}_n  \tilde{U}_{ni} - \hat{V}_n t_{ni}\big) - \big( F(t_{ni}, \tilde{U}_{ni}) -  \tilde{J}_n  \tilde{U}_{ni} -  \tilde{V}_n t_{ni} \big)\\
& =  (\tilde{J}_n -  \hat{J}_n)  \tilde{U}_{ni} + ( \tilde{V}_n - \hat{V}_n ) t_{ni}.
\end{aligned}
\end{equation}
 Expanding $\hat{N}_n(t_{ni},\hat{U}_{ni})$ into a Taylor series expansion around $(t_{ni},\tilde{U}_{ni})$ gives
\begin{equation} \label{eq2.48}
Nsplit1 = \frac{\partial \hat{N}_n}{\partial u}(t_{ni},  \tilde{U}_{ni}) \hat{E}_{ni}  + \int_{0}^{1} (1-\theta ) \frac{\partial^2  \hat{N}_n}{\partial u^2}(t_{ni}, \tilde{U}_{ni}+ \theta \hat{E}_{ni})(\hat{E}_{ni}, \hat{E}_{ni})\dd\theta.
\end{equation}
Under Assumption~2, \eqref{eq:bound_diff_N}  follows by bounding $\|Nsplit1\| + \|Nsplit2\|$.
\end{proof}
\begin{lemma}\label{Lemma2.5}
Under Assumptions~1 and 2, the bounds
\begin{subequations}  \label{eq:bounds}
\begin{align}
\| \hat{J}_n - \tilde{J}_n\| &  \leqslant C\|\hat{e}_n\|+ C \|\hat{e}_n\|^2,    \label{eq:bound:diffJ} \\
\| \hat{V}_n - \tilde{V}_n\| &  \leqslant C\|\hat{e}_n\|+ C \|\hat{e}_n\|^2,   \label{eq:bound:diffV} \\
\|\ee^{t \hat{J}_n}-\ee^{t\tilde{J}_n}\| &  \leqslant  C t \|\hat{e}_n\|, \q t\geq 0        \label{eq:bound:diffExp} \\
 \|\varphi _{\ell} ( t \hat{J}_n) - \varphi _{\ell} ( t \tilde{J}_n)\|  & \leqslant C t \|\hat{e}_n\|, \q t\geq 0  \label{eq:bound:diffVarphi} \\
 \|b_{i} ( H \hat{J}_n) - b _{i} (H  \tilde{J}_n)\|  & \leqslant C H \|\hat{e}_n\|, \label{eq:bound:diff_bi} \\
 \|a_{ij} ( H \hat{J}_n) - a_{ij}  (H  \tilde{J}_n)\|  & \leqslant C H \|\hat{e}_n\|   \label{eq:bound:diff_aij}
\end{align}
\end{subequations}
hold for all $n$, $\ell$, $i$ and $j$, as long as the global errors $\hat{e}_n$ remain in a sufficiently small neighborhood of $0$.
\end{lemma}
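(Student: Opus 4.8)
The plan is to establish the six estimates in the order listed, since each builds on its predecessors, and to note that only the uniformity of the constants is subtle. First I would prove the algebraic Jacobian bounds \eqref{eq:bound:diffJ} and \eqref{eq:bound:diffV}. Writing $\hat{u}_n = u(t_n) + \hat{e}_n$ and Taylor-expanding $\frac{\partial F}{\partial u}(t_n, \cdot)$ about $u(t_n)$,
\[
\hat{J}_n - \tilde{J}_n = \frac{\partial F}{\partial u}(t_n, u(t_n)+\hat{e}_n) - \frac{\partial F}{\partial u}(t_n, u(t_n)) = \frac{\partial^2 F}{\partial u^2}(t_n, u(t_n))\,\hat{e}_n + R,
\]
where the integral remainder $R = \int_0^1 (1-\theta)\frac{\partial^3 F}{\partial u^3}(t_n, u(t_n)+\theta\hat{e}_n)(\hat{e}_n,\hat{e}_n)\,\dd\theta$ is $\mathcal{O}(\|\hat{e}_n\|^2)$. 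Assumption~2 (uniform bounds on the second and third Fr\'echet derivatives of $F$ in the strip around the exact solution) then yields the linear-plus-quadratic bound \eqref{eq:bound:diffJ}, and the identical argument applied to $\frac{\partial F}{\partial t}$ gives \eqref{eq:bound:diffV}. This parallels the Taylor expansion already used in \eqref{eq2.48}.

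Next, for the semigroup difference \eqref{eq:bound:diffExp} I would use the Duhamel-type telescoping identity
\[
\ee^{t\hat{J}_n} - \ee^{t\tilde{J}_n} = \int_{0}^{t} \ee^{(t-s)\tilde{J}_n}\,(\hat{J}_n - \tilde{J}_n)\,\ee^{s\hat{J}_n}\,\dd s,
\]
obtained by differentiating $s \mapsto \ee^{(t-s)\tilde{J}_n}\ee^{s\hat{J}_n}$ and integrating from $0$ to $t$. By Assumption~1 both semigroups obey $\|\ee^{s\hat{J}_n}\|, \|\ee^{s\tilde{J}_n}\| \leq C\ee^{\omega s}$, and over the bounded time interval of interest the factor $\ee^{\omega s}$ is absorbed into $C$, so $\|\ee^{t\hat{J}_n} - \ee^{t\tilde{J}_n}\| \leq Ct\,\|\hat{J}_n - \tilde{J}_n\|$. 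Inserting \eqref{eq:bound:diffJ} and absorbing the higher-order $\|\hat{e}_n\|^2$ term into the linear one (valid since $\hat{e}_n$ lies near $0$) produces \eqref{eq:bound:diffExp}.

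The $\varphi$-function bound \eqref{eq:bound:diffVarphi} then follows from the integral representation \eqref{eq2.4}: writing $\varphi_{\ell}(t\hat{J}_n) - \varphi_{\ell}(t\tilde{J}_n)$ as the integral of $\ee^{(t-\tau)\hat{J}_n} - \ee^{(t-\tau)\tilde{J}_n}$ against the kernel $\tfrac{\tau^{\ell-1}}{(\ell-1)!}$, I bound the integrand pointwise by \eqref{eq:bound:diffExp} and integrate the resulting $Ct\|\hat{e}_n\|$ against the finite kernel to obtain \eqref{eq:bound:diffVarphi}. Finally, the coefficient bounds \eqref{eq:bound:diff_bi} and \eqref{eq:bound:diff_aij} are immediate from \eqref{eq2.8}: each $b_i$ and $a_{ij}$ is a fixed finite linear combination of $\varphi_k(H\hat{J}_n)$ and $\varphi_k(c_iH\hat{J}_n)$ respectively, so the triangle inequality together with \eqref{eq:bound:diffVarphi} (and $c_i\le 1$) collapses both to $CH\|\hat{e}_n\|$.

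I expect the only genuinely delicate point to be the uniformity of $C$ across all steps $n$ in \eqref{eq:bound:diffExp}, i.e.\ bounding $\|\ee^{s\hat{J}_n}\|$ independently of $n$. This is exactly where Assumption~1 and the stability bound \eqref{eq:stability_bound} enter: because $\hat{e}_n$ is assumed to remain in a small neighborhood of $0$, the perturbed generator $\hat{J}_n$ stays within the strip along the exact solution where $\frac{\partial F}{\partial u}$ generates a uniformly bounded semigroup, so the constants do not degrade as $n$ grows. Everything else reduces to Taylor's theorem and the triangle inequality.
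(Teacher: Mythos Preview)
Your proposal is correct and follows essentially the same approach as the paper: Taylor expansion for \eqref{eq:bound:diffJ}--\eqref{eq:bound:diffV}, the same Duhamel identity for \eqref{eq:bound:diffExp} (the paper derives it by viewing $\ee^{t\hat{J}_n}$ as the solution of $w'=\tilde{J}_n w+(\hat{J}_n-\tilde{J}_n)w$ and applying variation-of-constants, which after the substitution $s=\theta t$ is exactly your formula), and then the integral representations \eqref{eq2.4} and \eqref{eq2.8} for the remaining bounds. One minor correction: the stability bound \eqref{eq:stability_bound} concerns products of semigroups across different steps and is not used in this lemma; the uniformity of $\|\ee^{s\hat{J}_n}\|$ in $n$ comes from Assumption~1 alone, which applies to $\frac{\partial F}{\partial u}$ at every point in the strip.
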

\begin{proof}
By definition, $\hat{J}_n - \tilde{J}_n =  \frac{\partial F}{\partial u}(t_n, \hat{u}_n) - \frac{\partial F}{\partial u}(t_n, u(t_n))$.  Using Assumption~2, one can expand  $G(t, u):= \frac{\partial F}{\partial u}(t, u)$ in a Taylor series around $(t_n, u(t_n))$ to get
\[
  \hat{J}_n - \tilde{J}_n =  \frac{\partial G}{\partial u}(t_n,  u(t_n)) \hat{e}_n + \mathcal{O}(\|\hat{e}_n\|^2),
\]
which shows \eqref{eq:bound:diffJ}. Similarly \eqref{eq:bound:diffV} may be verified by expanding $\frac{\partial F}{\partial t}(t, u)$ in a Taylor series around $(t_n, u(t_n))$.

Next, we estimate the difference between the two semigroups $\ee^{t \hat{J}_n}$ and $\ee^{t\tilde{J}_n}$ in a similar manner as in \cite[Lemma~4.2]{LO14a}. Observing that
$\ee^{t \hat{J}_n}$ is the solution of the IVP
\begin{equation*}
w'(t)=\hat{J}_n w(t) = \tilde{J}_n w(t) + (\hat{J}_n - \tilde{J}_n) w(t), \q w(0) = I,
\end{equation*}
We apply the variation-of-constants formula to this IVP to obtain
\begin{equation*}
\ee^{t \hat{J}_n}-\ee ^{t\tilde{J}_n}= t\int_{0}^{1} \ee^{(1-\theta) t  \tilde{J}_n} (\hat{J}_n - \tilde{J}_n)\ee^{\theta t \hat{J}_n} \dd\theta.
\end{equation*}
Therefore, \eqref{eq:bound:diffExp} follows directly from \eqref{eq:bound1} and \eqref{eq:bound:diffJ}.  Using this, the bounds \eqref{eq:bound:diffVarphi}--\eqref{eq:bound:diff_aij} follow from using \eqref{eq2.4} and \eqref{eq2.8} (see also \cite[Lemma~4.3]{LO14a}).
\end{proof}
Using the results from Lemmas~\ref{Lemma2.3}, \ref{Lemma2.4}, and \ref{Lemma2.5}, we obtain the following result.
\begin{corollary}\label{Corollary2.1}
Under Assumptions~1 and 2, the estimate
\begin{equation}  \label{eq:bound:diffB}
\| \hat{B}_k - \tilde{B}_k \|   \leqslant     \sum_{j=1}^{i} C \|\hat{\varepsilon}_{kj} \| + C  \|\hat{e}_k\|  + C  \|\hat{e}_k\|^2
\end{equation}
holds for all $k$, as long as $\hat{E}_{ki} $ and the global errors $\hat{e}_k$ remain in a sufficiently small neighborhood of $0$.
\end{corollary}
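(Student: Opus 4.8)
The plan is to start from the explicit expression \eqref{eq:diffB} for $\hat{B}_k-\tilde{B}_k$ furnished by Lemma~\ref{Lemma2.2} and to bound its three groups of terms in turn. The first group, collecting $\big(\varphi_j(H\hat{J}_k)-\varphi_j(H\tilde{J}_k)\big)\tilde{V}_k$ and $\varphi_j(H\hat{J}_k)(\hat{V}_k-\tilde{V}_k)$, is routine: the first piece is controlled by \eqref{eq:bound:diffVarphi} together with the uniform boundedness of $\tilde{V}_k$ (Assumption~2), giving $\leqslant CH\|\hat{e}_k\|$, while the second piece combines the boundedness of $\varphi_j(H\hat{J}_k)$ (Assumption~1) with \eqref{eq:bound:diffV} to give $\leqslant C\|\hat{e}_k\|+C\|\hat{e}_k\|^2$; the scalar factors $t_k^{2-j}H^{j-1}$ are bounded on $[t_0,T]$. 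The second group, $\big(b_i(H\hat{J}_k)-b_i(H\tilde{J}_k)\big)\tilde{N}_k(t_{ki},\tilde{U}_{ki})$, is likewise handled by \eqref{eq:bound:diff_bi} and the boundedness of $\tilde{N}_k$, yielding $\leqslant CH\|\hat{e}_k\|$.

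The substantive work is the third group, $\sum_i b_i(H\hat{J}_k)\big(\hat{N}_k(t_{ki},\hat{U}_{ki})-\tilde{N}_k(t_{ki},\tilde{U}_{ki})\big)$. Since $b_i(H\hat{J}_k)$ is bounded by Assumption~1, Lemma~\ref{Lemma2.4} reduces each nonlinear difference to
\[
\|\hat{N}_k(t_{ki},\hat{U}_{ki})-\tilde{N}_k(t_{ki},\tilde{U}_{ki})\| \leqslant C\|\hat{E}_{ki}\|+C\|\hat{E}_{ki}\|^2+C\|\hat{J}_k-\tilde{J}_k\|+C\|\hat{V}_k-\tilde{V}_k\|,
\]
where the last two terms are already $\leqslant C\|\hat{e}_k\|+C\|\hat{e}_k\|^2$ by \eqref{eq:bound:diffJ}--\eqref{eq:bound:diffV}. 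It therefore remains to bound $\|\hat{E}_{ki}\|$ by the claimed combination of $\|\hat{\varepsilon}_{kj}\|$, $\|\hat{e}_k\|$, and $\|\hat{e}_k\|^2$.

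To that end I would establish, by induction on the stage index $i$, the estimate $\|\hat{E}_{ki}\|\leqslant \sum_{j=1}^{i}C\|\hat{\varepsilon}_{kj}\|+C\|\hat{e}_k\|+C\|\hat{e}_k\|^2$. The base case $i=1$ is immediate, since Lemma~\ref{Lemma2.3} records $\hat{E}_{k1}=\hat{e}_k$ with $\hat{\varepsilon}_{k1}=0$. For the inductive step I use the representation \eqref{eq:E_hat_result}: the term $\ee^{c_iH\hat{J}_k}\hat{e}_k$ is bounded via the semigroup estimate \eqref{eq:bound1}, the term $(\ee^{c_iH\hat{J}_k}-\ee^{c_iH\tilde{J}_k})u(t_k)$ via \eqref{eq:bound:diffExp} and the boundedness of $u(t_k)$, and $\hat{\varepsilon}_{ki}$ is retained as is. The remaining term $H(\hat{A}_{ki}-\tilde{A}_{ki})$ is expanded through \eqref{eq:diffA}: its $\varphi$-difference and $a_{ij}$-difference contributions are bounded exactly as in the first two groups above, while its final sum $\sum_{j<i}a_{ij}(H\hat{J}_k)\big(\hat{N}_k(t_{kj},\hat{U}_{kj})-\tilde{N}_k(t_{kj},\tilde{U}_{kj})\big)$ is treated by a further application of Lemma~\ref{Lemma2.4}, reducing it to $\|\hat{E}_{kj}\|$ for $j<i$, to which the inductive hypothesis applies.

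The main obstacle is precisely this recursive coupling: the difference $\hat{A}_{ki}-\tilde{A}_{ki}$ feeding $\|\hat{E}_{ki}\|$ again contains nonlinear differences $\hat{N}_k-\tilde{N}_k$ at the earlier stages, so the induction over $i$ must be threaded so that the accumulated $\|\hat{\varepsilon}_{kj}\|$ and $\|\hat{e}_k\|$ contributions close up without spawning uncontrolled higher-order terms. This is where the smallness hypothesis — that $\hat{E}_{ki}$ and the $\hat{e}_k$ lie in a sufficiently small neighborhood of $0$ — enters: it permits the quadratic terms $\|\hat{E}_{kj}\|^2$ to be absorbed into the corresponding linear ones, leaving only the linear $\|\hat{\varepsilon}_{kj}\|$ and $\|\hat{e}_k\|$ contributions together with the explicitly retained $\|\hat{e}_k\|^2$. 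Feeding the resulting bound on $\|\hat{E}_{ki}\|$ back into Lemma~\ref{Lemma2.4} and summing the three groups then yields \eqref{eq:bound:diffB}, with the sum $\sum_j C\|\hat{\varepsilon}_{kj}\|$ collecting the inner-solver errors arising across the internal stages.
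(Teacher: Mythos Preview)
Your proposal is correct and follows essentially the same route as the paper: bounding the three groups in \eqref{eq:diffB} via Lemmas~\ref{Lemma2.4} and \ref{Lemma2.5}, reducing the nonlinear difference to a bound on $\|\hat{E}_{ki}\|$, and then controlling $\|\hat{E}_{ki}\|$ through the representation \eqref{eq:E_hat_result}--\eqref{eq:diffA} and a recursion over the stage index with base case $\hat{E}_{k1}=\hat{e}_k$. The paper phrases the recursion by directly unrolling the inequality \eqref{eq2.54} rather than as a formal induction, and is somewhat less explicit than you are about using the smallness hypothesis to absorb the quadratic $\|\hat{E}_{kj}\|^2$ terms, but the substance of the argument is the same.
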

\begin{proof}
Using Lemmas~\ref{Lemma2.5} and \ref{Lemma2.4}, one can bound \eqref{eq:diffB} as
\begin{equation} \label{eq2.52}
\| \hat{B}_k - \tilde{B}_k \|    \leqslant   C H \|\hat{e}_k\|  + C  \|\hat{e}_k\| + C  \|\hat{e}_k\|^2 + C \|\hat{E}_{ki}\| + C \|\hat{E}_{ki}\| ^2.
\end{equation}
Next, we apply Lemma~\ref{Lemma2.3} (with $n = k$) to get $\hat{E}_{ki}$ and then estimate it by using  \eqref{eq:bound1}  and Lemma~\ref{Lemma2.5} (the bound \eqref{eq:bound:diffExp}):
\begin{equation} \label{eq2.53}
\|\hat{E}_{ki} \|   \leqslant   \|\hat{\varepsilon}_{ki} \| + C  \|\hat{e}_k\|  + C H \|\hat{e}_k\|  +  H \| \hat{A}_{ki} -  \tilde{A}_{ki} \|.
\end{equation}
Again using Lemmas~\ref{Lemma2.5} and \ref{Lemma2.4}, the bound on $\| \hat{A}_{ki} -  \tilde{A}_{ki} \|$ from \eqref{eq:diffA} is similar to \eqref{eq2.52}.  Inserting this into \eqref{eq2.53} we have
\begin{equation} \label{eq2.54}
\|\hat{E}_{ki} \|   \leqslant   \|\hat{\varepsilon}_{ki} \| +  C H \|\hat{e}_k\|  + C  \|\hat{e}_k\|  +  C  \|\hat{e}_k\|^2  +  \sum_{j=1}^{i-1} C \|\hat{E}_{kj}\|.
\end{equation}
Since $\hat{E}_{k1} = \hat{e}_k$ (see Lemma~\ref{Lemma2.3}), this relation finally shows that
\begin{equation} \label{eq2.55}
\|\hat{E}_{ki} \|   \leqslant   \|\hat{\varepsilon}_{ki} \| + C H \|\hat{e}_k\|  + C  \|\hat{e}_k\|  + C  \|\hat{e}_k\|^2  +  \sum_{j=1}^{i-1} C \|\hat{\varepsilon}_{kj} \|.
\end{equation}
Now using the fact that $C H \|\hat{e}_k\|  + C  \|\hat{e}_k\|   = (C H + C) \|\hat{e}_k\|   \leqslant  C \|\hat{e}_k\| $, one can easily show \eqref{eq:bound:diffB} from
 \eqref{eq2.52} and  \eqref{eq2.55}.
 \end{proof}
Finally, we  give a technical lemma, which can be later used to estimate the term \emph{Error1} appearing in  \eqref{eq:error_terms}.
\begin{lemma}\label{Lemma2.6}
Let $\{Z_j\}_{j=0}^{n}$ and  $\{Y_j \}_{j=0}^{n}$  be two sequences (of operators) in $X$. We have
\begin{equation} \label{eq2.56}
\prod_{j=0}^{n} Z_{n-j} - \prod_{j=0}^{n} Y_{n-j} =  \sum_{k=0}^{n}  \Big(\prod_{j=0}^{n-k-1} Z_{n-j}  \Big) (Z_k - Y_k ) \Big(\prod_{j= n-k +1}^{n} Y_{n-j}  \Big).
\end{equation}
\end{lemma}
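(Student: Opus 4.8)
The plan is to prove this as a purely algebraic telescoping identity; note that neither Assumption~1 nor Assumption~2 is needed, since no property of the operators beyond associativity of composition is used. Before anything else I would fix the product conventions: throughout, $\prod_{j=0}^{m} Z_{n-j} = Z_n Z_{n-1} \cdots Z_{n-m}$ is read left-to-right, and an empty product (i.e.\ when the upper index is smaller than the lower one) is taken to be the identity operator $I$. With these conventions the factor $\prod_{j=0}^{n-k-1} Z_{n-j}$ equals $Z_n Z_{n-1} \cdots Z_{k+1}$ and $\prod_{j=n-k+1}^{n} Y_{n-j}$ equals $Y_{k-1} Y_{k-2} \cdots Y_0$, so the $k$-th summand on the right-hand side of \eqref{eq2.56} is the ``hybrid'' block $(Z_n \cdots Z_{k+1})(Z_k - Y_k)(Y_{k-1} \cdots Y_0)$.

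Next I would introduce, for $-1 \leq k \leq n$, the partial products
\begin{equation*}
T_k = \big(Z_n Z_{n-1} \cdots Z_{k+1}\big)\big(Y_k Y_{k-1} \cdots Y_0\big),
\end{equation*}
in which every factor of index $>k$ is a $Z$ and every factor of index $\leq k$ is a $Y$. The two extreme values recover the quantities of interest: $T_{-1} = Z_n Z_{n-1} \cdots Z_0 = \prod_{j=0}^{n} Z_{n-j}$ (empty $Y$-block) and $T_n = Y_n Y_{n-1} \cdots Y_0 = \prod_{j=0}^{n} Y_{n-j}$ (empty $Z$-block). The heart of the argument is the observation that distributing $Z_k - Y_k$ in the $k$-th summand merges its $Z_k$ piece into the left block and its $Y_k$ piece into the right block, giving exactly $T_{k-1} - T_k$. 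Summing over $k = 0, \ldots, n$ then telescopes to $T_{-1} - T_n$, which is precisely the left-hand side of \eqref{eq2.56}.

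An equivalent route is induction on $n$: the base case $n = 0$ is the trivial identity $Z_0 - Y_0 = Z_0 - Y_0$, and for the inductive step I would split off the outermost factors and add and subtract $Z_n \prod_{j=0}^{n-1} Y_{(n-1)-j}$, writing $\prod Z - \prod Y = Z_n\big(\prod_{j=0}^{n-1}Z_{(n-1)-j} - \prod_{j=0}^{n-1}Y_{(n-1)-j}\big) + (Z_n - Y_n)\prod_{j=0}^{n-1}Y_{(n-1)-j}$; the first term produces the summands $k = 0, \ldots, n-1$ via the inductive hypothesis (prepended by $Z_n$) and the second is the $k = n$ summand. Since the lemma is elementary, the only real obstacle is bookkeeping: one must keep the empty-product conventions at the endpoints $k = 0$ and $k = n$ straight, and---because the operators $\ee^{H\hat{J}_{n-j}}$ and $\ee^{H\tilde{J}_{n-j}}$ to which this identity will be applied generally do not commute---one must preserve the left-to-right ordering at every step, never rearranging factors.
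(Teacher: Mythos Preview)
Your proposal is correct and follows essentially the same telescoping strategy as the paper: the paper proceeds by repeatedly adding and subtracting the hybrid products $\big(\prod_{j=0}^{n-k-1} Z_{n-j}\big)\big(\prod_{j=n-k+1}^{n} Y_{n-j}\big)$ one at a time, which is exactly your $T_k$ written out longhand. Your formalization via $T_k$ (and the alternative induction) is a cleaner packaging of the same argument, with the product conventions and noncommutativity handled more explicitly than in the paper.
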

\begin{proof}
By adding and subtracting $\prod_{j=0}^{n-1} Z_{n-j}Y_0$ and then $\prod_{j=0}^{n-2} Z_{n-j}Y_0Y_1$, the left hand side of \eqref{eq2.56} can be written as
\small
\begin{equation} \label{eq2.57}\notag
\begin{aligned}
& Z_n  Z_{n-1} \dots Z_1 (Z_0 - Y_0)  + Z_n  Z_{n-1} \dots Z_2 (Z_1 - Y_1) Y_0
 + (Z_n  Z_{n-1} \dots Z_2 - Y_n  Y_{n-1} \dots Y_2) Y_1 Y_0 \\
& =  \Big(\prod_{j=0}^{n-1} Z_{n-j} \Big) (Z_0 - Y_0)  + \Big(\prod_{j=0}^{n-2} Z_{n-j} \Big)(Z_1 - Y_1) Y_0 + (Z_n  Z_{n-1} \dots Z_2 - Y_n  Y_{n-1} \dots Y_2) Y_1 Y_0.
\end{aligned}
\end{equation}
 \normalsize
We continue adding and subtracting $\left(\prod_{j=0}^{n-k-1} Z_{n-j}\right)\left(\prod_{j=n-k+1}^n Y_{n-j}\right)$ in this manner until $k=n$ to obtain the right hand side \eqref{eq2.56}. 
\end{proof}
\begin{corollary}\label{Corollary2.2}
Under Assumptions~1 and 2, the estimate
\begin{equation} \label{eq2.58}
\Big\|\prod_{j=0}^{n}  \ee^{H  \hat{J}_{n-j}} - \prod_{j=0}^{n}  \ee^{H  \tilde{J}_{n-j}}\Big\|  \leqslant  H \sum_{k=0}^{n} C \|\hat{e}_k\|.
\end{equation}
holds for all $n$ as long as the global errors $\hat{e}_k$ remain sufficiently small.
\end{corollary}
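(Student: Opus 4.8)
The plan is to apply the telescoping identity of Lemma~\ref{Lemma2.6} with the choices $Z_j = \ee^{H\hat{J}_j}$ and $Y_j = \ee^{H\tilde{J}_j}$, which rewrites the difference of the two products as a single sum over $k$ whose $k$-th summand isolates the elementary factor $(Z_k - Y_k) = \ee^{H\hat{J}_k} - \ee^{H\tilde{J}_k}$ sandwiched between a product of $\hat{J}$-exponentials on the left, $\prod_{j=0}^{n-k-1}\ee^{H\hat{J}_{n-j}}$, and a product of $\tilde{J}$-exponentials on the right, $\prod_{j=n-k+1}^{n}\ee^{H\tilde{J}_{n-j}}$. Taking norms and using the triangle inequality then reduces the whole estimate to bounding each of these three pieces separately.

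For the middle factor I would invoke the bound \eqref{eq:bound:diffExp} from Lemma~\ref{Lemma2.5} with $t = H$ and $n$ replaced by $k$, which gives $\|\ee^{H\hat{J}_k} - \ee^{H\tilde{J}_k}\| \leqslant C H \|\hat{e}_k\|$. The two surrounding factors are partial products of discrete evolution operators, controlled by the stability bound \eqref{eq:stability_bound}: the left product is bounded uniformly by $C_\text{\rm S}$, and the right product is bounded by the analogous uniform stability constant for the family $\{\tilde{J}_n\}$. Multiplying these three bounds and absorbing the resulting factor of $C_\text{\rm S}^2$ into the generic constant $C$ shows that the $k$-th summand is bounded by $C H \|\hat{e}_k\|$; summing over $k = 0,\ldots,n$ yields \eqref{eq2.58} directly.

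The only genuine subtlety is the uniform-in-$(n,k)$ boundedness of the two telescoped products. For the $\hat{J}$-product this is exactly the content of the stability bound \eqref{eq:stability_bound}, whose whole point (as emphasized after its statement) is that $C_\text{\rm S}$ is uniform in $k$ and $n$ despite the step-to-step variation of the Jacobian. For the $\tilde{J}$-product one must observe that the same uniform estimate applies to $\{\tilde{J}_n\}$, since these are the Fr\'echet derivatives evaluated along the bounded, smooth exact solution (Assumption~2) and hence fall under the identical strongly continuous semigroup framework of Assumption~1. Everything else is routine: the factor of $H$ in \eqref{eq2.58} comes precisely from the $t = H$ evaluation in \eqref{eq:bound:diffExp}, and the hypothesis that the $\hat{e}_k$ remain in a small neighborhood of $0$ is inherited verbatim from Lemma~\ref{Lemma2.5}.
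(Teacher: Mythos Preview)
Your proposal is correct and follows essentially the same route as the paper: apply Lemma~\ref{Lemma2.6} with $Z_{n-j}=\ee^{H\hat{J}_{n-j}}$, $Y_{n-j}=\ee^{H\tilde{J}_{n-j}}$, bound the middle factor via \eqref{eq:bound:diffExp}, and control the flanking products by a uniform stability estimate. The only cosmetic difference is that the paper cites \eqref{eq:bound1} for the product bounds whereas you (arguably more precisely) invoke the discrete stability bound \eqref{eq:stability_bound} and explicitly note that the analogous estimate holds for the $\tilde{J}_n$ family.
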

\begin{proof}
This follows by applying Lemma~\ref{Lemma2.6} to $Z_{n-j} = \ee^{H \hat{J}_{n-j}}$ and $Y_{n-j} = \ee^{H \tilde{J}_{n-j}}$, and using the stability bound \eqref{eq:bound1} and the bound \eqref{eq:bound:diffExp} from Lemma~\ref{Lemma2.5}.
\end{proof}

\subsubsection{MERB convergence}\label{sec:convergence}

With the above preparation in hand, we are now ready to prove convergence of our MERB methods.

\begin{theorem}\label{convergence}
Let the initial value problem \eqref{eq1.1} satisfy Assumptions~1--2.  Consider for its numerical solution a MERB method  \eqref{eq:MERBa}--\eqref{eq:MERBb} that is constructed from an ExpRB method of global order $p$ using with macro time step $H$.  Let  $m$ denote the number of fast steps per slow step. If the fast ODEs \eqref{eq:MERBa} and \eqref{eq:MERBb} associated with the MERB method are integrated with micro time step
$h=H/m$ by using ODE solvers that have global order of convergence $q$ and $r$, respectively, then the MERB method is convergent with the error bound
\begin{equation}\label{eq:global_error}
  \| \hat{u}_n -u(t_n) \| \leqslant  C H^p + C Hh^q + C h^r
\end{equation}
on compact time intervals \ $t_0 \leq  t_n =t_0+nH \leq  T$. Here, while the first error constant  depends on $T-t_0$ (but is independent of $n$ and
$H$), the second and third error constants also depend on the error constants of the chosen ODE solvers.
\end{theorem}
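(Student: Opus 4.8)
The plan is to insert the global error representation from Theorem~\ref{Theorem2.4} and bound the four pieces $Error1$--$Error4$ separately, then convert the result into a discrete Gronwall inequality for $\|\hat e_{n+1}\|$. I would first record the two standard ingredients: the stability bound \eqref{eq:stability_bound}, which gives $\bigl\|\prod_{j=0}^{n-k-1} \ee^{H\tilde J_{n-j}}\bigr\|\le C_{\mathrm S}$ and likewise for the $\hat J$ product, uniformly in $k,n$; and the local error estimate $\|\tilde e_{k+1}\|\le C H^{p+1}$ of an order-$p$ ExpRB method. For $Error2=\sum_{k=0}^n \bigl(\prod \ee^{H\tilde J}\bigr)\tilde e_{k+1}$ these combine to $\|Error2\|\le C_{\mathrm S}\sum_{k=0}^n \|\tilde e_{k+1}\|\le C_{\mathrm S}(n+1)C H^{p+1}\le C H^p$, using $(n+1)H\le T-t_0+H$. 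For $Error3=\sum_{k=0}^n \bigl(\prod \ee^{H\hat J}\bigr)\hat\varepsilon_{k+1}$ I would use that $\hat\varepsilon_{k+1}$ (see \eqref{eq2.25}) is the global error of an order-$r$ solver over the fixed-length interval $[0,H]$ with micro step $h$; since the data $\hat J_n$ and $\hat q_n$ are uniformly bounded under Assumptions~1--2, this error is uniformly $\|\hat\varepsilon_{k+1}\|\le C H h^r$, so $\|Error3\|\le C_{\mathrm S}\sum_{k=0}^n C H h^r\le C h^r$.

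Next I would treat $Error1$ and $Error4$, which supply the remaining $CHh^q$ term together with the Gronwall kernel. For $Error1$ I apply Corollary~\ref{Corollary2.2} directly to get $\|Error1\|\le H\sum_{k=0}^n C\|\hat e_k\|$. For $Error4$ I use the decomposition in Lemma~\ref{Lemma2.2}: its first piece $\bigl(\prod\ee^{H\hat J}-\prod\ee^{H\tilde J}\bigr)\hat B_k$ is controlled by the product-difference estimate (Lemma~\ref{Lemma2.6} and Corollary~\ref{Corollary2.2}) together with the boundedness of $\hat B_k$, contributing a term of the form $H\sum_k\bigl[H\sum_j C\|\hat e_j\|\bigr]$, which is again of Gronwall type; its second piece $\bigl(\prod\ee^{H\tilde J}\bigr)(\hat B_k-\tilde B_k)$ is bounded by the stability constant and Corollary~\ref{Corollary2.1} as $C_{\mathrm S}\bigl(\sum_j C\|\hat\varepsilon_{kj}\|+C\|\hat e_k\|+C\|\hat e_k\|^2\bigr)$. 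Here the inner-solver errors $\hat\varepsilon_{kj}$ (see \eqref{eq2.25b}) are an order-$q$ solver's global errors over $[0,c_jH]$, hence $\|\hat\varepsilon_{kj}\|\le C H h^q$ uniformly; the prefactor $H$ of $Error4$ and the sum over $k$ then combine as $H\sum_{k=0}^n C H h^q=[(n+1)H]\,C H h^q\le C H h^q$.

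Collecting the four estimates into \eqref{eq2.27}/\eqref{eq:error_terms}, I would arrive at a recursive inequality of the form
\[
\|\hat e_{n+1}\|\ \le\ C H^p + C H h^q + C h^r + H\sum_{k=0}^n C\|\hat e_k\| + H\sum_{k=0}^n C\|\hat e_k\|^2 .
\]
The quadratic terms are the nuisance that prevents an immediate application of the discrete Gronwall lemma, and I would remove them by the standard a priori bootstrapping device: assume inductively that $\|\hat e_k\|\le\delta$ for a fixed small $\delta$ (which is in any case required so that the neighborhood hypotheses of Lemmas~\ref{Lemma2.4}, \ref{Lemma2.5} and Corollaries~\ref{Corollary2.1}, \ref{Corollary2.2} hold), whence $\|\hat e_k\|^2\le\delta\|\hat e_k\|$ and the quadratic sum is absorbed into the linear one. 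A discrete Gronwall inequality then yields $\|\hat e_{n+1}\|\le \ee^{C(T-t_0)}\bigl(C H^p + C H h^q + C h^r\bigr)$ uniformly on $t_0\le t_{n+1}\le T$, which is \eqref{eq:global_error}; I would close the induction by choosing $H$ (hence $h=H/m$) small enough that this bound stays below $\delta$.

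The chief obstacle I anticipate is exactly this coupling: the bounds of Lemmas~\ref{Lemma2.4}, \ref{Lemma2.5} and Corollary~\ref{Corollary2.1} are valid only while $\|\hat e_k\|$ and $\hat E_{ki}$ remain small, yet those are precisely the quantities being estimated, so the quadratic remainder and the neighborhood constraints must be handled by a careful induction on $n$ rather than a single Gronwall step. A secondary technical point is to justify the uniform-in-$n$ bounds $\|\hat\varepsilon_{k+1}\|\le C H h^r$ and $\|\hat\varepsilon_{kj}\|\le C H h^q$; this rests on the uniform boundedness, under Assumptions~1--2, of the linearization data $\hat J_n,\hat V_n,\hat N_n,\widehat{D}_{nj}$ defining the fast forcing polynomials \eqref{eq:MERB:poly:p}--\eqref{eq:MERB:poly:q}, so that the chosen order-$q$ and order-$r$ solvers face a family of fast IVPs with uniformly bounded right-hand sides and therefore uniform error constants.
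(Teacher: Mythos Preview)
Your proposal is correct and follows essentially the same route as the paper: decompose via Theorem~\ref{Theorem2.4}, bound $Error1$ by Corollary~\ref{Corollary2.2}, $Error2$ and $Error3$ by the stability bound~\eqref{eq:stability_bound}, and $Error4$ via Lemma~\ref{Lemma2.2} together with Corollaries~\ref{Corollary2.1} and~\ref{Corollary2.2}; then insert $\|\tilde e_{k+1}\|\le CH^{p+1}$, $\|\hat\varepsilon_{k+1}\|\le CHh^r$, $\|\hat\varepsilon_{kj}\|\le CHh^q$ (the paper invokes the classical bound $h^r\tfrac{C}{L}(e^{LH}-1)=Ch^rH\varphi_1(LH)$ from \cite{hairer93} for this), and finish with a discrete Gronwall lemma. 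Your explicit bootstrapping argument to absorb the quadratic $\|\hat e_k\|^2$ terms into the linear Gronwall kernel is in fact more careful than the paper's presentation, which simply drops them between its Error4 estimate and inequality~\eqref{eq:2.60} under the standing smallness hypothesis.
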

\begin{proof}
We begin with the global error expansion given in Theorem~\ref{Theorem2.4}, and estimate each of the terms in \eqref{eq:error_terms}.
First, from Corollary~\ref{Corollary2.2} it is obvious that $\|Error1\| \leqslant  H \sum_{k=0}^{n} C  \|\hat{e}_k\|$.
Then the stability bound \eqref{eq:bound1} tells us that $\|Error2\| \leqslant  \sum_{k=0}^{n} C \| \tilde{e}_{k+1}\|$ and  $\|Error3\| \leqslant  \sum_{k=0}^{n} C \| \hat{\varepsilon}_{k+1}\|$. Next, examining the expression \eqref{eq:Error4} we employ Corollaries \ref{Corollary2.1} and \ref{Corollary2.2}, along with the stability bound \eqref{eq:bound1}, to obtain
$\|Error4\| \leqslant  H \sum_{k=0}^{n} \big[ CH \|\hat{e}_k\| +  \sum_{j=1}^{i} C \|\hat{\varepsilon}_{kj} \| + C  \|\hat{e}_k\|  + C  \|\hat{e}_k\|^2 \big]$.
Therefore,  we derive from \eqref{eq:error_terms} that
\begin{equation}\label{eq:2.60}
 \|\hat{e}_{n+1} \|  \leqslant  H \sum_{k=0}^{n} C  \|\hat{e}_k\| +  \sum_{k=0}^{n} C \| \tilde{e}_{k+1}\| + \sum_{k=0}^{n} C \| \hat{\varepsilon}_{k+1}\| + H \sum_{k=0}^{n} (\sum_{j=1}^{i} C \|\hat{\varepsilon}_{kj} \|).
\end{equation}
From our assumption that the base ExpRB method has global order $p$, its local error satisfies $\|\tilde{e}_{k+1}\|  \leqslant  C H^{p+1}$.

As for the global errors $\hat{\varepsilon}_{k+1}$ and $\hat{\varepsilon}_{kj}$ obtained by solving the fast ODEs  \eqref{eq:MERBb} and  \eqref{eq:MERBa}  on $[0, H]$ and  $[0, c_i H]$, respectively (using micro time step $h$), the global error analysis from \cite[Theorem~3.4]{hairer93} guarantees that
\begin{subequations} \label{fastError}
 \begin{align}
   \|\hat{\varepsilon}_{k+1} \|  & \leqslant  h^r \tfrac{C}{L} (e^{ L H} -1) = C h^r H \tfrac{e^{ L H} -1}{LH}  =  Ch^r H \varphi_1 (L H) \leqslant Ch^r H,  \label{fastError:a} \\
   \|\hat{\varepsilon}_{kj} \|  & \leqslant  h^q \tfrac{C}{L} (e^{ L c_i H} -1) = C h^q c_i H \tfrac{e^{ L c_i H} -1}{L c_i H}  \leqslant    Ch^q H \varphi_1 (L c_i H)  \leqslant Ch^q H.  \label{fastError:b}
 \end{align}
\end{subequations}
These bounds require that the Jacobian $\hat{J}_k$ of the right hand sides of both ODEs satisfies $\|\hat{J}_k \| \leqslant  L$.  This follows from $\|\frac{\partial F}{\partial u}(t, u) \| \leqslant L$, which easily follows from Assumption~2.  Combining these bounds and shifting the index $n$ in \eqref{eq:2.60} to $n-1$, we obtain
\begin{equation}\label{eq:2.62}
 \|\hat{e}_{n} \|  \leqslant  H \sum_{k=0}^{n-1} C  \|\hat{e}_k\| +  \sum_{k=0}^{n-1} \big( C H^{p+1} +  C h^r H  + C h^q H^2 \big).
\end{equation}
The error bound \eqref{eq:global_error} results from applying a discrete Gronwall lemma to \eqref{eq:2.62}.
\end{proof}
\begin{remark} \label{remark2.6}
Since $h=H/m$, one can write the error bound \eqref{eq:global_error} as $\| \hat{u}_n -u(t_n) \| \leqslant  C H^p + \frac{ C}{m^q} H^{q+1} +  \frac{C}{m^r}H^r$. Thus for a fixed $m$, a MERB method \eqref{eq:MERBa}--\eqref{eq:MERBb} will converge with order $p$ provided that the inner ODE solvers for \eqref{eq:MERBa} and \eqref{eq:MERBb} have orders $q \ge p-1$ and $r\ge p$, respectively. We note that this is an improvement compared to MRI-GARK methods \cite{Sandu2019}, that in fact require both $q \ge p$ and $r\ge p$ for a method of order $p$. It is also worth mentioning that the error bound   \eqref{eq:global_error} for MERB methods is similar to the one obtained with MERK methods \cite{LCR2020}.
\end{remark}
\subsection{Construction of specific MERB methods}
\label{section:2.4}

Guided by Theorem~\ref{convergence}, in order to derive MERB methods it is important to begin with base ExpRB methods that satisfy Lemma~\ref{Lemma2.1}. Fortunately, such ExpRB methods are available up to order 6 in the literature, see \cite{HOS09,LO14a,LO16}. In this subsection, we extend some of these methods to write their coefficients more generally, and then derive MERB methods of orders 2 through 6 from these schemes.  Note that since a MERB method \eqref{eq:MERBa}--\eqref{eq:MERBb} is uniquely characterized by its polynomials $\hat{p}_{ni}(\tau)$ and $\hat{q}_n(\tau)$, we only provide those polynomials here.  In particular, we note that these MERB methods require fewer modified ODEs to be solved per slow time step than comparable order MRI-GARK  \cite{Sandu2019} and MERK methods \cite{LCR2020}.
\subsubsection{Second-order methods}
First, consider the second-order ExpRB-Euler scheme (see \cite{HOS09}, and \cite[Sect. 1.2.2]{Luan2014} for non-autonomous problems)
\[
  u_{n+1}= u_n  + H \varphi _{1} ( H J_n)F(t_n, u_n) + H^2 \varphi _{2} ( H J_n) V_n.
\]
Using Lemma~\ref{Lemma2.1} we immediately derive from this the second-order $\mathtt{MERB2}$ method:
\begin{align}
\hat{q}_n(\tau) &= \nnltermn + (t_n + \tau) \vn, \quad \tau \in [0,H].
\end{align}
This only requires the solution of one modified ODE.  We note that since second order multirate methods have been available for some time, we do not include $\mathtt{MERB2}$ in our numerical results, and instead focus on higher order multirate methods.

\subsubsection{Third-order methods}
In \cite{HOS09}, a 2-stage 3rd-order ExpRB method called $\mathtt{exprb32}$ was constructed (using $c_2=1$) for autonomous problems.  Extending this to non-autonomous problems and writing this for general $c_2$, we solve condition~1 of Table~\ref{tb2.1} directly (with $s=2$) to give a general family of third-order methods:
\begin{equation} \label{eq:expRB3}
\begin{aligned}
U_{n2} &= u_n  + c_2 H \varphi _{1} ( c_2 H J_n)F(t_n, u_n) +  c^{2}_{2} H^2 \varphi _{2} ( c_2 H J_n)V_n, \\
  u_{n+1}& = u_n  + H \varphi _{1} ( H J_n)F(t_n, u_n) + H^2 \varphi _{2} ( H J_n) V_n + H \tfrac{2}{c^2_2} \varphi_{3} (H J_n) D_{n2}.
\end{aligned}
\end{equation}
From this we construct the $\mathtt{MERB3}$ family of third-order methods:
\begin{equation} \label{eq:MERB3}
\begin{aligned}
   \hat{p}_{n2}(\tau) &= \nnltermn + (t_n + \tau)\vn,  & \tau &\in [0,c_2H],\\
   \hat{q}_n(\tau) &= \nnltermn + (t_n + \tau) \vn +  \frac{\tau^2}{c_2^2H^2}\Dntwo, & \tau &\in [0,H].
\end{aligned}
\end{equation}
Clearly, this requires the solution of 2 modified ODEs per slow time step (whereas third-order MERK and MRI-GARK methods require solving 3 modified ODEs per step).
In our numerical experiments we take $c_2 = \tfrac{1}{2}$, which gives rise to a total fast time step traversal for $\mathtt{MERB3}$ of $(1+c_2)H = 1.5H$.
\subsubsection{Fourth-order method}
There exist several 4th-order ExpRB schemes \cite{HOS09,LO14a,LO16,Luan17,Luan2021b} with coefficients fulfilling  Lemma~\ref{Lemma2.1}. However, we chose a 2-stage 4th-order ExpRB method called $\mathtt{exprb42}$ which was constructed for autonomous problems in \cite{Luan17}. Transforming this to non-autonomous form, we have
\begin{equation} \label{eq:expRB4}
\begin{aligned}
U_{n2} &= u_n  + \tfrac{3}{4} H \varphi _{1} ( \tfrac{3}{4}  H J_n)F(t_n, u_n) +  \tfrac{9}{16}  H^2 \varphi _{2} ( \tfrac{3}{4}  H J_n)V_n, \\
  u_{n+1}& = u_n  + H \varphi _{1} ( H J_n)F(t_n, u_n) + H^2 \varphi _{2} ( H J_n) V_n + H  \tfrac{16}{9} \varphi_{3} (H J_n) D_{n2}.
\end{aligned}
\end{equation}
We then apply Lemma~\ref{Lemma2.1} to construct the 4th-order $\mathtt{MERB4}$ method:
\begin{equation} \label{eq:MERB4}
\begin{aligned}
     \hat{p}_{n2}(\tau) &= \nnltermn + (t_n + \tau)\vn, & \tau &\in [0,\frac{3}{4}H]\\
     \hat{q}_n(\tau) &= \nnltermn + (t_n + \tau)\vn +  \frac{16}{9}\frac{\tau^2}{H^2}\Dntwo, & \tau &\in [0,H].
\end{aligned}
\end{equation}
$\mathtt{MERB4}$ only requires solving 2 modified ODEs per slow time step, whereas 4th-order MRI-GARK and MERK methods require 5 and 4 modified ODEs in each step, respectively.  We further note that \eqref{eq:MERB4} has a total fast traversal time of $\frac{7}{4}H = 1.75H$.

\subsubsection{Fifth-order methods}
ExpRB methods of order 5 can be found in \cite{LO14a,LO16}. Here, for efficiency purposes, we consider a parallel scheme called $\mathtt{pexprb54s4}$, whose coefficients (with fixed nodes $c_i$) satisfy Lemma~\ref{Lemma2.1}. It uses $s =4$ stages and is embedded with a fourth-order scheme (for stepsize adaptivity) but can be implemented as a 3-stage method.  A detailed derivation of $\mathtt{pexprb54s4}$ is given in \cite{LO16} (solving conditions 1--4 of Table~\ref{tb2.1} with the choices $b_2 (Z) =0$, $a_{43}(Z) = 0$, $a_{32}(Z)=\tfrac{2c^3_3}{c^2_2} \varphi_{3}(c_3 Z)$, and $a_{42}=\tfrac{2c^3_4}{c^2_2} \varphi_{3}(c_4 Z)$).
Following that derivation, we present here a family of fifth-order ExpRB methods (depending on parameters $c_2, c_3, c_4$) for non-autonomous problems:
\begin{equation} \label{eq:expRB5}
\begin{aligned}
U_{n2} &= u_n  + H\left(c_2\varphi _{1} (c_2  H J_n)F(t_n, u_n) +  c_2^2 H \varphi _{2} (c_2  H J_n)V_n\right), \\
U_{n3} &= u_n  + H\left( c_3 \varphi _{1} (c_3  H J_n)F(t_n, u_n) +  c_3^2 H \varphi _{2} (c_3  H J_n)V_n + \tfrac{2c^3_3}{c^2_2} \varphi_{3}(c_3 H J_n) D_{n2}\right), \\
U_{n4} &= u_n  + H\left(c_4 \varphi _{1} (c_4  H J_n)F(t_n, u_n) +  c_4^2 H \varphi _{2} (c_4  H J_n)V_n + \tfrac{2c^3_4}{c^2_2} \varphi_{3}(c_4 H J_n) D_{n2}\right), \\
  u_{n+1}& = u_n  + H \left(\varphi _{1} ( H J_n)F(t_n, u_n) + H \varphi _{2} ( H J_n) V_n
               +  b_3 (H J_n)  D_{n3} + b_4 (H J_n) D_{n4}\right)\\
  \text{with} \\
  &  b_3 (H J_n)  = \tfrac{1}{c_3^2 (c_4 - c_3)} \big(c_4 \varphi_{3} (H J_n) - 6 \varphi_{4} (H J_n) \big), \\
   & b_4 (H J_n) =  \tfrac{1}{c_4^2(c_3-c_4)} \big(2 c_3 \varphi_{3} (H J_n) -  6 \varphi_{4} (H J_n) \big),\\
   & c_4=\tfrac{3(5c_3-4)}{5(4c_3-3)}.
\end{aligned}
\end{equation}
We note that the two internal stages $\{ U_{n3}, U_{n4} \}$  are independent of one another and thus can be computed simultaneously. They also have the same format, in that they have the same formula but only act on different inputs $c_3$ and $c_4$, which we exploit below to give the same polynomial for their corresponding modified ODEs.

Applying Lemma~\ref{Lemma2.1} to \eqref{eq:expRB5} results in the fifth-order family of $\mathtt{MERB5}$ methods:
\setlength\abovedisplayskip{6pt}
\begin{equation} \label{eq:MERB5}
\begin{aligned}
\hat{p}_{n2}(\tau) &=  \nnltermn + (t_n + \tau) \vn, \quad \hspace{3.6cm} \tau \in [0,c_2H]\\
\hat{p}_{n3}(\tau) &\equiv  \hat{p}_{n4}(\tau) = \nnltermn + (t_n + \tau) \vn +  \big(\tfrac{\tau}{c_2 H}\big)^2\Dntwo, \quad \tau \in [0,c_3H]\\
\hat{q}_n(\tau) &= \nnltermn + (t_n + \tau) \vn  + \tfrac{\tau^2}{H^2}\big(\tfrac{c_4}{c_3^2 (c_4 - c_3)} \widehat{D}_{n3} + \tfrac{c_3}{c_4^2(c_3 - c_4)}\widehat{D}_{n4} \big)  \\
& - \tfrac{\tau^3}{H^3} \big(\tfrac{1}{c_3^2(c_4 - c_3)}\widehat{D}_{n3} + \tfrac{1}{c_4^2(c_3-c_4)} \widehat{D}_{n4}\big), \quad \hspace{2.2cm}   \tau \in [0,H].
\end{aligned}
\setlength{\belowdisplayskip}{6pt}
\end{equation}
This only requires solving 3 modified ODEs per slow step (the only existing fifth-order multirate method, MERK5, requires 5).  In our experiments we choose $c_2 = c_4 = \frac{1}{4} < c_3 = \frac{33}{40}$, so we can solve the modified ODE \eqref{eq:MERBa} using the polynomial $\hat{p}_{n3}(\tau)$ on $[0, c_3 H]$ to obtain \emph{both} $\widehat{U}_{n3} \approx U_{n3}$ and $\widehat{U}_{n4} \approx U_{n4}$ (since $c_4<c_3$), without solving an additional fast ODE on $[0, c_4 H]$.
Using this strategy, the total fast traversal time for $\mathtt{MERB5}$ is $(1+c_2+c_3)H = \frac{83}{40}H=2.075H$.
\subsubsection{Sixth-order methods}
To the best of our knowledge, the only existing ExpRB method of order 6, named $\mathtt{pexprb65s7}$, is given in \cite{LO16}. It uses $s=7$ stages
and is embedded with a fifth-order method.  As with \eqref{eq:expRB5}, this method consists of multiple independent internal stages (namely the stages in two groups $\{U_{n2}, U_{n3}\}$ and $\{U_{n4}, U_{n5}, U_{n6}, U_{n7}\}$) that can be computed simultaneously, which we exploit to implement like a 3-stage method.  While $\mathtt{pexprb65s7}$ is constructed for autonomous problems and uses a set of fixed nodes $c_i$, we extend the derivation from \cite{LO16} to construct a family of 7-stage sixth-order methods
for non-autonomous problems:
 \begin{equation} \label{eq:expRB6}
\begin{aligned}
U_{nk}= u_n   &+ c_k H \varphi _{1} (c_k  H J_n)F(t_n, u_n) +  (c_k H)^2 \varphi _{2} (c_k  H J_n)V_n, \ k = 2, 3 \\
U_{n i} = u_n  &+ c_i H \varphi _{1} (c_i  H J_n)F(t_n, u_n) +  (c_i H)^2 \varphi _{2} (c_i  H J_n)V_n, \\
& + H a_{i2}(H J_n) D_{n2} + H a_{i3}(H J_n) D_{n3},  \ \hspace{1.7cm} i = 4, 5, 6, 7\\
u_{n+1} = u_n  & + H \varphi _{1} ( H J_n)F(t_n, u_n) + H^2 \varphi _{2} ( H J_n) V_n +H\sum_{i=4}^{7} b_i (H J_n) D_{ni},
\end{aligned}
\setlength{\belowdisplayskip}{2pt}
\end{equation}
where
\begin{align*}
 a_{i2}(H J_n) & = \tfrac{1}{c_2^2 (c_3 - c_2)} \big(2 c_i^3 c_3 \varphi_{3} (c_i H J_n) - 6 c_i^4 \varphi_{4} (c_i H J_n) \big), \\
 a_{i3}(H J_n) & = \tfrac{1}{c_3^2 (c_2 - c_3)} \big(2 c_i^3 c_2 \varphi_{3} (c_i H J_n) - 6 c_i^4 \varphi_{4} (c_i H J_n) \big),\\
  b_i (H J_n)  & = -2  \hat{\alpha}_i \varphi_{3} ( H J_n) + 6  \hat{\eta}_i \varphi_{4} ( H J_n) - 24  \hat{\beta}_i \varphi_{5} ( H J_n) + 120 \hat{\gamma}_i \varphi_{6} ( H J_n),\\
 \hat{\gamma}_i &= \dfrac{1}{c^2_i (c_i-c_k)(c_i - c_l)(c_i-c_m)}, \qquad
 \hat{\alpha}_i =c_k c_l c_m \hat{\gamma}_i, \\
 \hat{\beta}_i &=(c_k + c_l+c_m) \hat{\gamma}_i, \qquad
 \hat{\eta}_i  = (c_k c_l + c_l c_m + c_k c_m) \hat{\gamma}_i.
\end{align*}
Here  $i, k, l, m \in \{4, 5, 6, 7\}$ are distinct indices and $c_i, c_k, c_l, c_m$ are distinct (positive) nodes.  Applying Lemma~\ref{Lemma2.1} we obtain the first-ever sixth-order infinitesimal multirate method, $\mathtt{MERB6}$:
\begin{equation*} \label{eq:MERB6}
\begin{aligned}
\hat{p}_{n2}(\tau) &\equiv \hat{p}_{n3}(\tau)= \nnltermn + (t_n + \tau) \vn, & \tau &\in [0,c_2H]\\
\hat{p}_{n4}(\tau) & \equiv \hat{p}_{n5}(\tau) \equiv  \hat{p}_{n6}(\tau) \equiv  \hat{p}_{n7}(\tau)=
     \nnltermn  + (t_n + \tau) \vn \\
    &+  \tfrac{\tau^2}{(c_3 - c_2)H^2}\Big(\tfrac{c_3}{c_2^2 } \widehat{D}_{n2} - \tfrac{c_2}{c_3^2}\widehat{D}_{n3} \Big)
    - \tfrac{\tau^3}{(c_3 - c_2)H^3} \Big(\tfrac{1}{c_2^2}\widehat{D}_{n2} - \tfrac{1}{c_3^2} \widehat{D}_{n3}\Big), & \tau &\in [0,c_4H]\\
    \hat{q}_n(\tau) &= \nnltermn + (t_n + \tau) \vn
    - \tfrac{\tau^2}{H^2} \sum_{i=4}^{7} \hat{\alpha}_i \widehat{D}_{ni}
    + \tfrac{\tau^3}{H^3} \sum_{i=4}^{7} \hat{\eta}_i \widehat{D}_{ni}\\
   & \hspace{2cm}  - \tfrac{\tau^4}{H^4} \sum_{i=4}^{7} \hat{\beta}_i \widehat{D}_{ni}
    + \tfrac{\tau^5}{H^5} \sum_{i=4}^{7} \hat{\gamma}_i \widehat{D}_{ni}.
    & \tau &\in [0,H]
\end{aligned}
\end{equation*}
As seen, $\mathtt{MERB6}$ requires only 3 modified ODEs per each slow time step like $\mathtt{MERB5}$, reflecting the fact that its base 6th-order ExpRB method \eqref{eq:expRB6} has the structure of a 3-stage method. $\mathtt{MERB6}$ can be also implemented in an efficient way by choosing  $c_3 < c_2$ and $c_5, c_6, c_7 < c_4$. With these choices, we can solve the modified ODE \eqref{eq:MERBa} using $\hat{p}_{n2}(\tau)$ on $[0, c_2 H]$ to obtain both $\widehat{U}_{n2}
\approx U_{n2}$ and $\widehat{U}_{n3} \approx U_{n3}$ without solving an additional fast ODE on $[0, c_3 H]$. Similarly, we can solve \eqref{eq:MERBa} using $\hat{p}_{n4}(\tau)$ on $[0, c_4 H]$ to get \emph{all four approximations} $\widehat{U}_{ni}
\approx U_{ni}$ ($i = 4, 5, 6, 7$) without solving 3 additional ODEs on $[0, c_5 H]$, $[0, c_6 H]$, and $[0, c_7 H]$.
In our numerical experiments, we take $c_3= c_5 = \frac{1}{10} < c_2 =c_6= \frac{1}{9}<c_7=\frac{1}{8}<c_4=\frac{1}{7}$. This gives a total fast traversal time of $(1+c_2+c_4)H=\frac{79}{63}H\approx1.253H$.

\subsection{MERB method implementation}
\label{section:2.5}
In Algorithm~\ref{merbalgo}  we provide a precise description of the MERB algorithm.
\begin{algorithm}[h!]
\caption{MERB method}
\label{merbalgo}
\begin{list}{$\bullet $}{}
\item \textbf{Input:}  $F$; $J$; $V$; $t_0$; $u_0$; $s$; $c_i$ ($i=1,\ldots,s$); $H$
\item \textbf{Initialization:}  Set $n=0$; $\hat{u}_n=u_0$.\\
While $t_n<T$
\begin{enumerate}
  \item Set $\widehat{U}_{n1}=\hat{u}_n$.
  \item Compute $\widehat{J}_n = J(t_n,\hat{u}_n)$ and $\widehat{V}_n = V(t_n,\hat{u}_n)$
\item For $i=2,\ldots,s$ do
\begin{enumerate}
  \item Find  $\hat{p}_{ni}(\tau)$ as in \eqref{eq:MERB:poly:p}.
  \item Solve \eqref{eq:MERBa} on $[0, c_i H]$ to obtain $\widehat{U}_{ni}\approx y_{ni}(c_i H)$.
\end{enumerate}
 \item Find $\hat{q}_{n}(\tau)$ as in \eqref{eq:MERB:poly:q}
  \item Solve \eqref{eq:MERBb} on $[0, H]$ to get $\hat{u}_{n+1}\approx y_{n+1}(H).$
  \item Update $t_{n+1}:=t_n+H$, $n:=n+1$.
\end{enumerate}
\item \textbf{Output:} Approximate values $\hat{u}_n\approx u_n, n=1,2,\ldots$ (where
$u_n$ is the numerical solution at time $t_n$ obtained by an ExpRB method).
\end{list}
\end{algorithm}
We note that in our implementations of MERB methods, we found it beneficial to include formulas for $\widehat{N}_n(t,u)$ and $\widehat{D}_{ni}(t,u)$ as additional inputs to the algorithm (provided they can be pre-computed) for use in equations \eqref{eq:MERB:poly:p} and \eqref{eq:MERB:poly:q} to avoid floating-point cancellation errors when seeking very accurate solutions.  On the other hand, we note that within the MERB algorithm, both the products $Jw$ and $V\tau$ can be approximated from $F$ using finite differences,
\begin{align*}
   J(t,u) w &= \tfrac{1}{\sigma} \left(F(t,u+\sigma w)-F(t,u)\right) + \mathcal{O}(\sigma),\q\text{and}\\
   V(t,u) \tau &= \tfrac{1}{\sigma} \left(F(t+\sigma \tau,u)-F(t,u)\right) + \mathcal{O}(\sigma),
\end{align*}
instead of $J$ and $V$ being provided analytically; however, when seeking high accuracy then such approximations can cause excessive floating-point cancellation error.


\section{Numerical Experiments} \label{sec:numerical_results}

In this section, we implement MERB methods on select multirate test problems to demonstrate accuracy and efficiency.
We first discuss choices for the inner fast integrators, fast-slow splitting, optimal time scale separation factors, and give a general description of how the error and efficiency are measured.
We then present numerical results for a reaction-diffusion problem and a semi-linear nonautonomous system with coupling between the fast and slow variables.
For each problem, we compare the proposed \merbthree, \merbfour, \merbfive, and \merbsix~ methods with other recently developed multirate methods that treat the slow time scale explicitly, namely \merkthree, \merkfour, and \merkfive~ from \cite{LCR2020}, plus \mrigarkthree~ and \mrigarkfour~ from \cite{Sandu2019}, written here in short form as \mrigarkth~ and \mrigarkf. MATLAB implementations of all tests are provided on Github \cite{MERB_repo}.

\subsection{Choice of inner integrators}

\par
For uniformity in our implementations of MERB, MERK, and MRI-GARK methods of the same order, we use the same explicit fast integrators for solving all modified ODEs.  Third-order methods use a 3-stage explicit third-order method from equation (233f) of \cite{butcherNumericalMethodsOrdinary2008}, fourth-order methods use a 4-stage explicit fourth-order method commonly known as ``RK4" from \cite{kutta1901}, fifth-order methods use an 8-stage fifth-order method which is the explicit part of ARK5(4)8L[2]SA from \cite{kennedyAdditiveRungeKutta2003}, while the sixth-order method uses an 8-stage explicit sixth-order method based on the 8,5(6) procedure of \cite{vernerExplicitRungeKutta1978}. We note that although both MERK and MERB methods could compute the internal stages using a lower order integrator, for the sake of simplicity that approach is not used here.

\subsection{Fast-slow splitting}

\par The splitting of an IVP into fast and slow components, $u'(t) = F(t,u) = F_f(t,u) + F_s(t,u)$, for MERB methods is dictated by the dynamic linearization process at each time step,
\begin{equation}
\label{eq:dynamic_linearization}
  \hat{u}'(t)=F(t,\hat{u}(t)) = \hat{J}_{n} \hat{u}(t)+ \hat{V}_n t+ \hat{N}_n(t,\hat{u}(t)),
\end{equation}
where the multirate splitting becomes $F_f(t,u) = \hat{J}_n u$ and $F_s(t,u) = \hat{V}_n t+ \hat{N}_n(t,u)$.  This brings interesting questions when comparing against MERK and MRI-GARK methods that do not require dynamic linearization.  MERK methods require that $F_f(t,u) = \mathcal{L}u$, but MRI-GARK methods have no constraints on $F_f$ or $F_s$.  Thus to provide a more thorough picture in the following comparisons of MERB, MERK and MRI-GARK methods, we consider two separate fast-slow splittings for each problem.  The first is the dynamic linearization \eqref{eq:dynamic_linearization}, that can place more of a problem's dynamics at the fast time scale than other fixed multirate splittings; this offers a potential for greater multirate accuracy but at the expense constructing the dynamic linearization at each slow step.
Our second splitting defines a fixed $F_f(t,u) = \mathcal{L}u$, leaving $F_s(t,u) = F(t,u) - \mathcal{L}u$; in the ensuing results we call this the `fixed linearization'.
Though the motivation for this splitting arises from the MERK requirement on $F_f$, we also apply this splitting to MRI-GARK methods.
We note that other fixed splittings which can offer different accuracy and efficiency insights on multirate methods are possible, however we only focus on one fixed splitting for each test problem.
Methods that use fixed linearization are denoted with an asterisk in our results, for instance, \merkthrees~ uses a fixed linearization while \merkthree~ uses dynamic linearization.

\subsection{Optimal time scale separation}

In order to compare methods at their peak performance, we strive to determine an optimal time scale separation factor for each multirate method on each test problem. The optimal time scale separation factor $m = H/h$ is the integer ratio between the slow and fast time step sizes that results in maximal efficiency. We follow the approach from \cite{LCR2020} for determining this value experimentally, by comparing efficiency in terms of slow-only function evaluations and total (slow+fast) function evaluations for several different values of $m$ and $H$. 

\begin{table}[h!]
\centering
\begin{tabular}{| *{8}{c|}}
    \hline
        \thead{Method} & \thead{Slow \\stages}& \thead{Modified \\ODEs} & \thead{Fast time\\ traversal\\ of $[0,H]$} & \multicolumn{2}{c|}{\thead{React.-diffusion \\ optimal $m$}} & \multicolumn{2}{c|}{\thead{Bidirect.~coupling\\ optimal $m$}} \\

       & & & & \thead{Dynamic} & \thead{Fixed} & \thead{Dynamic} & \thead{Fixed} \\
        \hline
        \merbthree & $2$ & $2$ & $1.5$  &$10$ &  &  $80$ &  \\
        \hline
        \merkthree & $3$ & $3$ & $2.166$ &$20$ & $10$ & $80$ &  $10$\\
        \hline
        \mrigarkth & $3$ & $3$ &$1$             &$20$ & $5$ & $80$ & $10$\\
        \hline
        \merbfour  & $2$ & $2$ &$1.75$   &$10$ &  &  $40$ & \\
        \hline
        \merkfour  & $6$ & $4$ & $2.833$  &$20$ & $10$ & $40$  &$10$  \\
        \hline
        \mrigarkf  & $5$ & $5$ & $1$              &$10$  &$1$  & $40$ & $1$ \\
        \hline
        \merbfive  & $4$ & $3$ & $2.075$  &$5$ &   & $10$ & \\
        \hline
        \merkfive  & $10$& $5$ &$3.2$   &$5$ & $5$& $10$ & $10$ \\
        \hline
        \merbsix   & $7$ & $3$ &$1.253$ &$5$ &  & $5$ & \\
        \hline
\end{tabular}
\caption{Multirate method properties: number of slow internal stages and modified ODEs, total step traversal times, and optimal $m$ factors for each problem and splitting.}
\label{table:optm}
\end{table}

\par Table~\ref{table:optm} presents the optimal $m$ values for each method and each test problem splitting. A trend emerges among MERK and MRI-GARK methods that use both dynamic and fixed linearization: dynamic linearization almost exclusively results in larger optimal $m$ values than fixed linearization, supporting our earlier hypothesis that dynamic linearization includes more of the problem within the fast dynamics, thereby requiring a larger $m$ value to resolve. We also note that for the fixed linearization, both MRI-GARK methods have smaller $m$ values than other methods, resulting in less computational work at the fast time scale for a given $H$ value.

\subsection{Presentation of results}
For each test problem we sort our results into 3 groups: $\mathcal{O}(H^3)$ methods, $\mathcal{O}(H^4)$ methods, and $\mathcal{O}(H^5)$ with $\mathcal{O}(H^6)$ methods.
In each group we provide four kinds of ``log-log" plots: one convergence plot (error versus $H$) and three efficiency plots that measure cost through slow function calls, total function calls, and MATLAB runtimes, respectively.
Solution error is computed as the maximum absolute error over all spatial grid points and time outputs, as measured against either an analytical solution or highly accurate reference solution.
We also compute convergence rates using a linear least squares fit of the $\log($error$)$ versus $\log(H)$ data, neglecting points at the reference solution floor. Each of our efficiency measurements tells a different story. First, slow function calls illustrate the cost of a multirate method when applied to IVP systems with expense dominated by the slow components $F_s(t,u)$.  Second, total function calls capture the cost of $F_f(t,u)$, and highlight properties of methods related to their total fast traversal times.  Lastly, even though MATLAB runtimes are a poor proxy for runtimes on HPC applications, we use them here to capture the costs associated with dynamic linearization, and to measure how these costs affect efficiency.

\subsection{Reaction-diffusion} \label{subsec:reacdiff}
From Savcenco et al.\cite{savcenco}, we consider the reaction-diffusion equation:
\begin{equation}
    u_t = \epsilon u_{xx} + \gamma u^2 (1-u), \quad 0<x<5,  \quad 0<t\leq 5.
\end{equation}
 The initial and boundary conditions are $u(x,0) = (1+\exp(\lambda (x-1)))^{-1}$ and $u_x(0,t) = u_x(5,t) = 0$ respectively, where $\lambda = \frac{1}{2}\sqrt{2\gamma/\epsilon}$. Multiple combinations of $\gamma$ and $\epsilon$ are possible, here we choose $\gamma = 0.1$ and $\epsilon = 0.01$ that lead to an optimal $m >1$ when using dynamic linearization. We use a second-order centered finite difference scheme with $101$ spatial grid points to discretize the diffusion term.
In addition to dynamic linearization, MERK and MRI-GARK methods also use a fixed splitting where $F_{f}(t,u) = \epsilon u_{xx}$ and $F_{s}(t,u) = \gamma u^2 (1-u)$.
The numerical solution is considered at 10 evenly spaced points within the time interval, and all methods are tested with slow time steps $H = 0.5\times 2^{-k}$, for $k=0,\ldots,6$.
We compute error by comparing against a reference solution obtained using MATLAB's \texttt{ode15s} with relative tolerance $10^{-13}$ and absolute tolerance $10^{-14}$.

\begin{figure}[htbp]
   \centering
\includegraphics[width=0.9\textwidth]{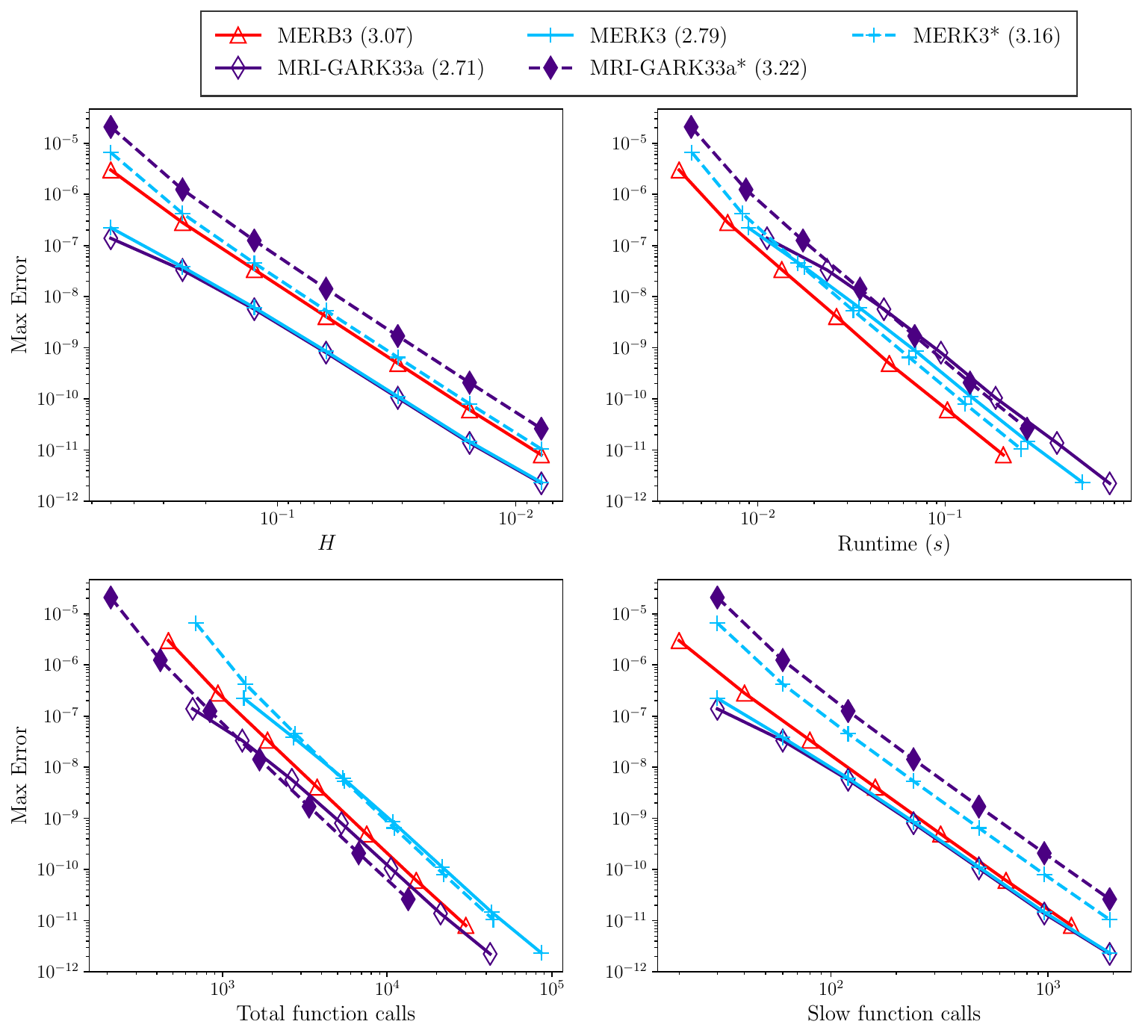}
   \caption{Convergence (top-left) and efficiency (top-right, bottom) for $\mathcal{O}(H^3)$ methods on the reaction-diffusion problem of Section~\ref{subsec:reacdiff}.
   The legend displays the measured convergence rates for each method in parentheses.
   }
   \label{fig:randdthirdordereff}
\end{figure}

Figures~\ref{fig:randdthirdordereff}-\ref{fig:randdfifthordereff} show accuracy and efficiency results for this problem. Examining the top-left of  Figure~\ref{fig:randdthirdordereff} and the legend, we see that each third-order method attains the expected order of convergence. The observed errors for the dynamic linearization approach on all methods are less than for fixed linearization. This can be attributed to inclusion of more of the problem at the fast time scale in the case of dynamic linearization, which results in higher optimal time scale separation factors (as shown in Table~\ref{table:optm}) and lower errors. Among the methods that apply dynamic linearization, \merkthree~ and \mrigarkth~ have almost identical errors that are lower than those for \merbthree, which uses an $m$ two times smaller. \merkthrees~ and \mrigarkths~ have the largest errors on this test problem.

Turning to the efficiency results at the top-right and bottom of Figure~\ref{fig:randdthirdordereff}, the most efficient methods in each of these plots are closest to the bottom left corner. For our MATLAB implementations, \merbthree~ has an obvious advantage in terms of runtime, while both \mrigarkth~ and \mrigarkths~ have the least efficient implementation.  Taking into account only MERK and MRI-GARK methods, there is not a significant runtime difference between the dynamic fixed linearization approaches, although the fixed linearization is very slightly faster at tighter accuracies.
When looking at total function calls, both \mrigarkth~ and \mrigarkths~ are the most efficient of the group, largely owing to their shorter fast traversal time of $1.0H$, while \merbthree~ and \merkthree~ have traversal times of $1.5H$ and $2.166H$, respectively.
The slow function call efficiency is closely aligned with the convergence behavior: at large values of $H$, \merkthree~ and \mrigarkth~ are the most efficient, but \merbthree~ is just as efficient as \merkthree~ and \mrigarkth~ at tighter accuracies.

\begin{figure}[htbp]
    \centering
    \includegraphics[width=0.9\textwidth]{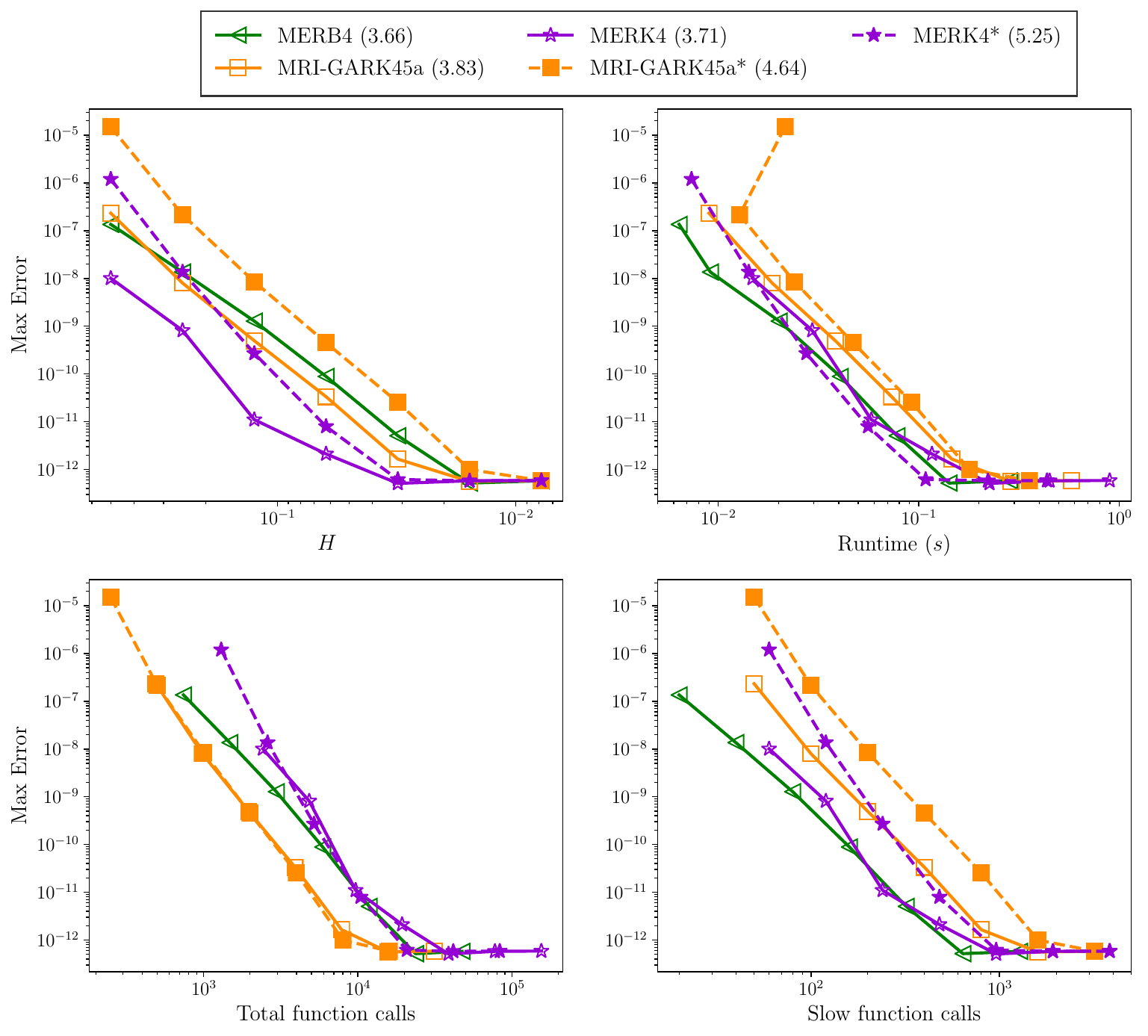}
   \caption{Convergence (top-left) and efficiency (top-right, bottom) for $\mathcal{O}(H^4)$ methods on the reaction-diffusion problem of Section~\ref{subsec:reacdiff}.
   }
   \label{fig:randdfourthordereff}
\end{figure}

In Figure~\ref{fig:randdfourthordereff} we present the corresponding plots for the fourth-order methods.  Here, all methods approximately reach their expected order of convergence, with \merkfours~ and \mrigarkfs~ outperforming their expectations.
\merkfour~ has the smallest error, but also uses an $m$ value that is two times greater than other fourth-order methods on this test problem (see Table~\ref{table:optm}). \merkfours~ starts off with larger errors than \merbfour~ and \mrigarkf, but because it converges at fifth-order for this test problem, its errors quickly drop below those for \merbfour~ and \mrigarkf. \mrigarkfs~ has an $m=1$ which seemingly puts it at a disadvantage when comparing accuracy with other methods, however, larger values of $m$ only lead to more total function evaluations with no reduction in error.  Focusing on runtime efficiency, \merbfour~ is more efficient at larger error values, but \merkfour~ is eventually the most efficient at smaller error values.  Total function call efficiency repeats the previous pattern from third-order methods: \mrigarkf~ and \mrigarkfs~ are the most efficient and closely line up, \merbfour~ performs better than \merkfour~ and \merkfours due to its shorter total traversal time of $1.75H$ versus $2.833H$.
Finally, when comparing slow function calls \merbfour~ is the most efficient. This is expected since \merbfour~ only has 2 slow stages, compared with 6 for \merkfour~ and 5 for \mrigarkf.

\begin{figure}[htbp]
   \centering
   \includegraphics[width=0.9\textwidth]{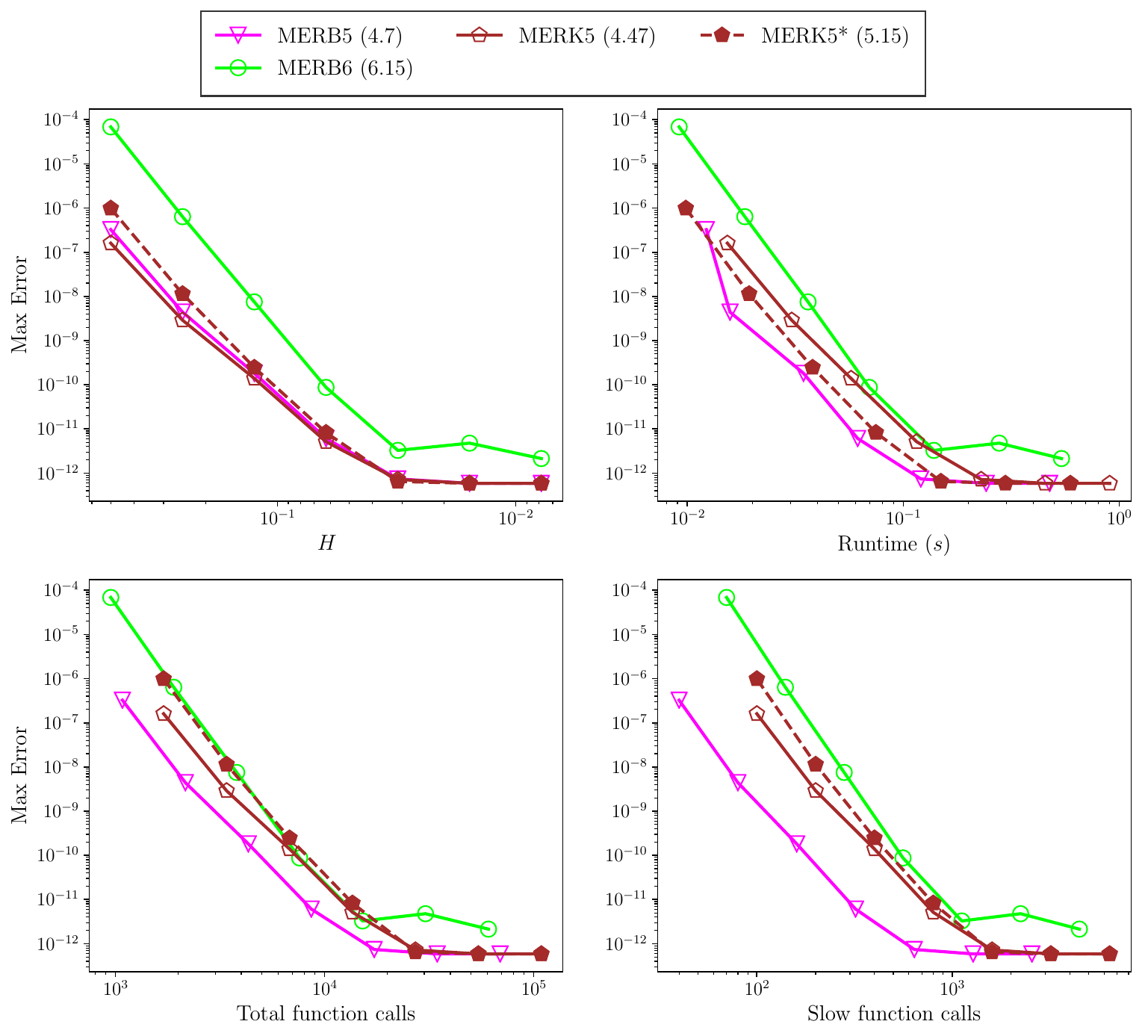}
   \caption{Convergence (top-left) and efficiency (top-right, bottom) of $\mathcal{O}(H^5)$ and $\mathcal{O}(H^6)$ methods on the reaction-diffusion problem of Section \ref{subsec:reacdiff}.
   }
    \label{fig:randdfifthordereff}
\end{figure}

The first thing to note discussing the fifth and sixth-order methods is that they all use the same $m=5$ for this problem (Table~\ref{table:optm}).  Their convergence and efficiency plots are provided in Figure~\ref{fig:randdfifthordereff}.  On this problem, all methods converge at their expected rates, although \merbsix~ starts out with larger error values than the fifth-order methods, that all cluster around similar error values, although the dynamic linearization used by \merkfive~ results in slightly less error than the fixed linearization used in \merkfives.  In all three measures of efficiency \merbfive~ is the most efficient. Looking at total function calls efficiency, \merbfive~ has a total traversal time of $2.075H$ compared to $3.2H$ for \merkfive~ and $1.253H$ for \merbsix~ (though we barely get to see advantages of this due to its larger error on this problem). When it comes to slow function calls, \merbfive's 4 slow stages is much lower than the 10 stages for \merkfive and 7 stages for \merbsix.  Combining the merits of \merbfive~ from total function calls and slow function calls explains its runtime efficiency performance.

\subsection{Bidirectional coupling system} \label{subsec:bcnonauto}
Inspired by \cite[Sect. 5.1]{estep}, we propose the following semi-linear, nonautonomous bidirectional coupling problem on $0<t\leq1$:
\begin{subequations} \label{eq:bicoupling}
 \begin{align}
    \label{eq:bicoupling_u}
    u' &= \sigma v - w - \beta t,\\
    \label{eq:bicoupling_v}
    v' &= -\sigma u,\\
    \label{eq:bicoupling_w}
    w' &= -\lambda(w + \beta t) - \beta \Bigg(u - \frac{a(w + \beta t)}{a\lambda + b\sigma} \Bigg)^2 - \beta \Bigg(v - \frac{b(w + \beta t)}{a \lambda + b\sigma}\Bigg)^2,
\end{align}
\end{subequations}
with exact solution $u(t) = \cos(\sigma t) + ae^{-\lambda t}, \quad v(t) = -\sin(\sigma t) + be^{-\lambda t},\quad \text{and} \quad w(t) = (a\lambda + b \sigma)e^{-\lambda t} - \beta t.$
This problem features linear coupling from slow to fast time scales through the equation \eqref{eq:bicoupling_u}, and nonlinear coupling from fast to slow time scales through the equation for \eqref{eq:bicoupling_w}.
In addition, it includes tunable parameters $\{a, b, \beta, \lambda, \sigma\}$ taken here to be $\{1, 20,0.01, 5, 100\}$, with $a\sigma = b\lambda$;  $\sigma$ determines the frequency of the fast time scale and $\beta$ controls the strength of the nonlinearity. In the case of dynamic linearization, smaller values of $\beta$ correspond with weaker nonlinearity, resulting in higher values of the optimal time scale separation factor $m$.
While there are various possible fixed splittings, we chose the most natural splitting into fast variables and slow variables informed by the exact solution:

{\small
\begin{align*}\label{eq:bidirectional_rhs}
F_{f}(t,\mathbf{u}) = \begin{bmatrix} \sigma v\\
    -\sigma u \\ 0 \end{bmatrix}, \quad
F_{s}(t,\mathbf{u}) = \begin{bmatrix}-w-\beta t  \\ 0 \\ -\lambda(w + \beta t) - \beta \left(u - \frac{a(w - \beta t)}{a\lambda + b\sigma} \right)^2 - \beta \left(v - \frac{b(w - \beta t)}{a \lambda + b\sigma}\right)^2  \end{bmatrix}
\end{align*}}

We assess error at 20 equally spaced points within the time interval and consider time steps $H=0.05 \times 2^{-k}$ for integers $k=0,1,\ldots,7$.

\begin{figure}[htbp]
    \centering
    \includegraphics[width=\textwidth]{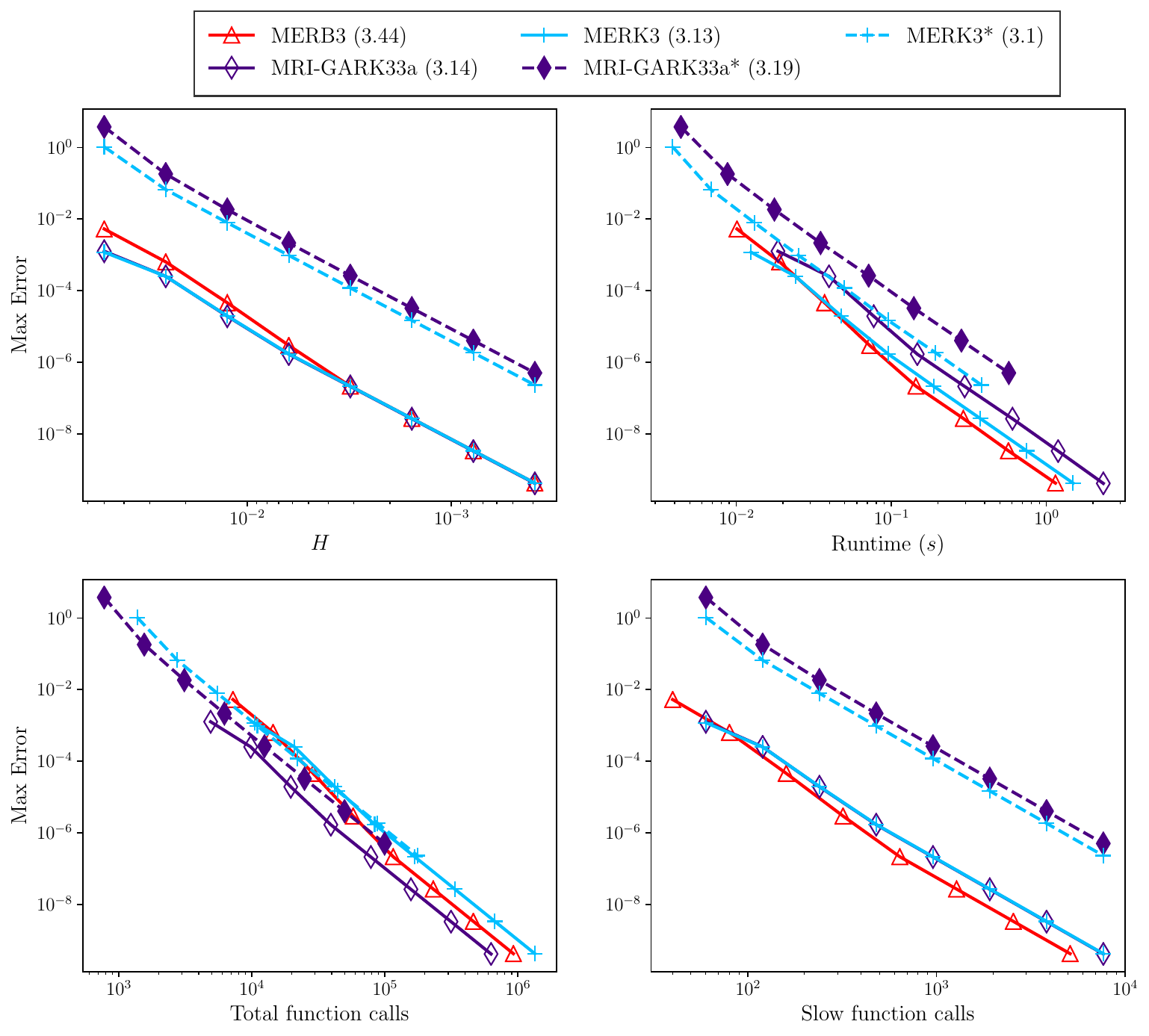}
   \caption{Convergence (top-left) and efficiency (top-right, bottom) of $\mathcal{O}(H^3)$ methods on the bidirectional coupling problem of Section \ref{subsec:bcnonauto}.
   }
   \label{fig:bcthirdordereff}
\end{figure}

\par Accuracy and efficiency plots for this problem are shown in Figures~\ref{fig:bcthirdordereff}-\ref{fig:bcfifthordereff}. Starting with third-order methods in Figure~\ref{fig:bcthirdordereff}, all methods incorporating dynamic linearization have similar errors, coinciding with their uniform time scale separation factor of $m=80$. Similarly, the methods using fixed linearization \merkthrees~ and \mrigarkths~ have the same $m=10$, leading to comparable errors. As before, dynamic linearization leads to lower errors than fixed linearization (here the difference in errors for the same $H$ is up to $10^3$).
The previous efficiency observations are repeated here as well: \merbthree~ is the most efficient in runtime and slow function evaluations, while \mrigarkth~ is slightly more efficient in total function evaluations. 

\begin{figure}[htbp]
    \centering
    \includegraphics[width=0.9\textwidth]{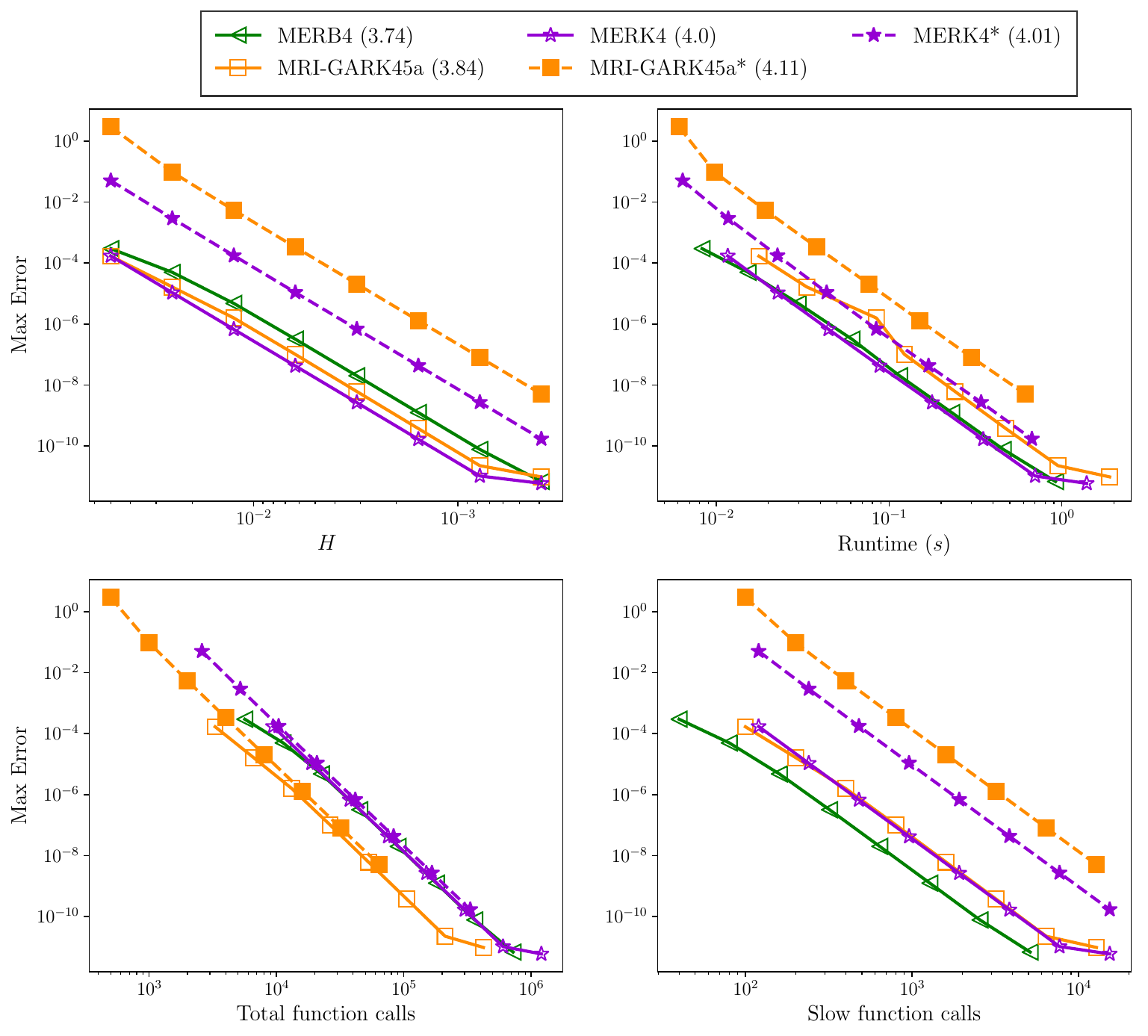}
   \caption{Convergence (top-left) and efficiency (top-right, bottom) of $\mathcal{O}(H^4)$ methods on the bidirectional coupling problem of Section \ref{subsec:bcnonauto}.
   }
\label{fig:bcfourthordereff}
\vspace{-5mm}
\end{figure}

\par Results for fourth-order methods are plotted in Figure~\ref{fig:bcfourthordereff}. Like with the third-order methods, we use the same $m$ for dynamic linearization methods, but here there is slightly more variation in errors, with \merkfour~ being slightly more accurate than the others in this group.  Both \merbfour~ and \merkfour~ show optimal runtime efficiency, the MRI-GARK methods are the most efficient in total function calls, and \merbfour~ is again the most efficient in slow function calls.

\begin{figure}[htbp]
    \centering
    \includegraphics[width=0.9\textwidth]{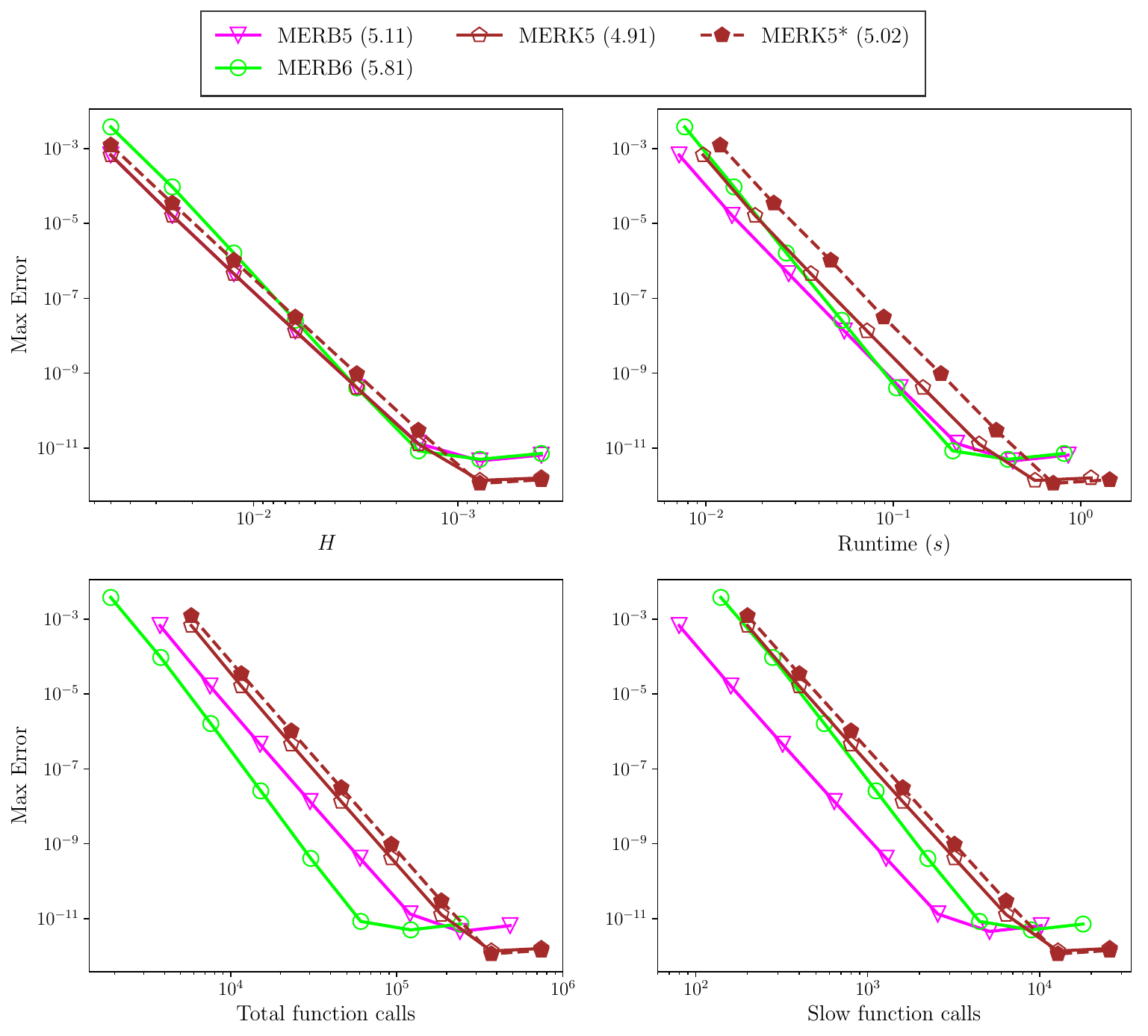}
   \caption{Convergence (top-left) and efficiency (top-right, bottom) of $\mathcal{O}(H^5)$ and $\mathcal{O}(H^6)$ methods on the bidirectional coupling problem of Section \ref{subsec:bcnonauto}.
   }
    \label{fig:bcfifthordereff}
\vspace{-5mm}
\end{figure}

\par Finally, the performance of fifth and sixth-order methods on the bidirectional coupling problem is illustrated in Figure~\ref{fig:bcfifthordereff}. The accuracy of these methods is almost identical on this test problem with \merbsix~ demonstrating a slightly steeper line, so we focus on the efficiency comparisons. Both of our new MERB methods are the most competitive for this test problem. We observe that \merbfive~ is slightly more efficient in terms of runtime at larger error values, but at smaller errors \merbsix~ becomes more efficient due to its higher order of accuracy. \merbsix~ is also the most efficient in total function calls followed by \merbfive, due to their smaller total traversal times in comparison with \merkfive.  The small number of stages for \merbfive~ makes it clearly more efficient in terms of slow function calls.

\section{Conclusions}
\label{sec:conclusion}

We have introduced a new approach for multirate integration of initial-value problems that evolve on multiple time scales.  Employing an MIS-like approach wherein the couplings between slow and fast time scales occurs through defining a sequence of modified IVPs at the fast time scale, and built off of existing ExpRB methods, the proposed MERB methods allow creation of multirate methods with very high order of accuracy, and minimize the amount of costly processing of the slow time scale operator.  In addition to deriving a clear mechanism for constructing these from certain classes of ExpRB schemes, we provide rigorous convergence analysis for MERB methods.  We note that the style of this analysis is much more elegant than our approach for MERK methods \cite{LCR2020}, in that we analyze the overall MERB error by separately quantifying the error between the MERB approximation of the underlying ExpRB method, and the error in the ExpRB approximation of the original IVP.  With this theory in hand, we propose a suite of MERB methods with orders 2 through 6, where in the cases of orders 3--6, we additionally provide generalizations of the base ExpRB methods and extend these to non-autonomous problems.

We examine the performance of the proposed MERB methods of orders 3 through 6, comparing these against existing MERK and explicit MRI-GARK methods on two test problems, and where the MERK and MRI-GARK methods are tested with two potential multirate splittings on each problem.  While all MERB, MERK and MRI-GARK methods exhibited their theoretical convergence rates on these problems and splittings, their efficiency differed considerably.  In order to provide results that potentially apply to a broad range of multirate applications, we investigate efficiency using three separate measurements of cost: MATLAB runtime, total function calls (both fast and slow), and slow function calls only.  Within these metrics, some general patterns emerge.  First, most of the methods exhibited optimal efficiency at higher $m=H/h$ values when using multirate splittings based on dynamic linearization as opposed to fixed splittings.  Second, the proposed MERB methods show the best runtime efficiency of all methods and splittings, although in some cases the equivalent order MERK method with dynamic splitting is competitive.  Third, due to their total fast time scale traversal times of $1.0H$, the MRI-GARK methods always exhibit the best total function call efficiency.  Lastly, due to their low number of slow stages, the proposed MERB methods are uniformly the most efficient when considering slow function calls (only in a few instances MERK with dynamic splitting was competitive). This is particular of interest for multirate problems where the fast component is much less costly to compute than the slow component.

Based on these results, we find that the newly proposed MERB methods provide a unique avenue to construction of high order MIS-like multirate methods, and that they are very competitive in comparison with other recently-developed high order MIS-like multirate schemes.  More work remains, however.  An obvious extension is to include embeddings to enable low-cost temporal error estimation, as well as to investigate robust techniques for error-based multirate time step adaptivity.  A further extension of MERB methods could focus on applications that require implicit or mixed implicit-explicit treatment of processes at the slow time scale.

\bibliographystyle{siamplain}
\bibliography{references}

\begin{thebibliography}{10}

\bibitem{Bauer2019Extended}
{\sc T.~P. Bauer and O.~Knoth}, {\em Extended multirate infinitesimal step
  methods: Derivation of order conditions}, Journal of Computational and
  Applied Mathematics,  (2019), p.~112541,
  \url{https://doi.org/https://doi.org/10.1016/j.cam.2019.112541}.

\bibitem{butcherNumericalMethodsOrdinary2008}
{\sc J.~C. Butcher}, {\em Numerical {{Methods}} for {{Ordinary Differential
  Equations}}}, {John Wiley \& Sons}, Apr. 2008.

\bibitem{Chinomona2020}
{\sc R.~Chinomona and D.~R. Reynolds}, {\em Implicit-explicit multirate
  infinitesimal {GARK} methods}, SIAM J.~Sci.~Comput.,  (accepted, 2021).

\bibitem{MERB_repo}
{\sc R.~Chinomona, D.~R. Reynolds, and V.~T. Luan}, {\em Multirate exponential
  {R}osenbrock methods ({MERB})}.
\newblock \url{https://github.com/rujekoc/merbrepo}, 2021.

\bibitem{EN2000}
{\sc K.~J. Engel and R.~Nagel}, {\em One-parameter semigroups for linear
  evolution equations}, Springer, New York, 2000.

\bibitem{estep}
{\sc D.~Estep, V.~Ginting, and S.~Tavener}, {\em A posteriori analysis of a
  multirate numerical method for ordinary differential equations}, Computer
  Methods in Applied Mechanics and Engineering, 223–224 (2012), p.~10–27.

\bibitem{Keyes2013}
{\sc D.~E.~K. et~al.}, {\em Multiphysics simulations: Challenges and
  opportunities}, The International Journal of High Performance Computing
  Applications, 27 (2013), pp.~4--83.

\bibitem{hairer93}
{\sc E.~Hairer, S.~P. N{\o}rsett, and G.~Wanner}, {\em Solving Ordinary
  Differential Equations {I} (2nd Revised. Ed.): Nonstiff Problems},
  Springer-Verlag, Berlin, Heidelberg, 1993.

\bibitem{HOS09}
{\sc M.~Hochbruck, A.~Ostermann, and J.~Schweitzer}, {\em Exponential
  {R}osenbrock-type methods}, SIAM J. Numer. Anal., 47 (2009), pp.~786--803.

\bibitem{kennedyAdditiveRungeKutta2003}
{\sc C.~A. Kennedy and M.~H. Carpenter}, {\em Additive
  {{Runge}}\textendash{{Kutta}} schemes for convection\textendash
  diffusion\textendash reaction equations}, Applied Numerical Mathematics, 44
  (2003), pp.~139--181, \url{https://doi.org/10.1016/S0168-9274(02)00138-1}.

\bibitem{kutta1901}
{\sc W.~Kutta}, {\em Beitrag zur n{\"a}herungsweisen integration totaler
  differentialgleichungen}, Zeitschrift für Math. u. Phys., 46 (1901),
  pp.~435--453.

\bibitem{Luan2014}
{\sc V.~T. Luan}, {\em High-order exponential integrators}, PhD thesis,
  University of Innsbruck, 2014.

\bibitem{Luan17}
{\sc V.~T. Luan}, {\em Fourth-order two-stage explicit exponential integrators
  for time-dependent {PDE}s}, Applied Numerical Mathematics, 112 (2017),
  pp.~91--103.

\bibitem{LCR2020}
{\sc V.~T. Luan, R.~Chinomona, and D.~R. Reynolds}, {\em A {{New Class}} of
  {{High}}-{{Order Methods}} for {{Multirate Differential Equations}}}, SIAM
  Journal on Scientific Computing, 42 (2020), pp.~A1245--A1268,
  \url{https://doi.org/10.1137/19M125621X}.

\bibitem{Luan2021b}
{\sc V.~T. Luan and D.~L. Michels}, {\em Efficient exponential time integration
  for simulating nonlinear coupled oscillators}, Journal of Computational and
  Applied Mathematics, 391 (2021), p.~113429.

\bibitem{LO13}
{\sc V.~T. Luan and A.~Ostermann}, {\em Exponential {B}-series: The stiff
  case}, SIAM Journal on Numerical Analysis, 51 (2013), pp.~3431--3445.

\bibitem{LO14a}
{\sc V.~T. Luan and A.~Ostermann}, {\em Exponential {R}osenbrock methods of
  order five--construction, analysis and numerical comparisons}, Journal of
  Computational and Applied Mathematics, 255 (2014), pp.~417--431.

\bibitem{LO16}
{\sc V.~T. Luan and A.~Ostermann}, {\em Parallel exponential {R}osenbrock
  methods}, Computers \& Mathematics with Applications, 71 (2016),
  pp.~1137--1150.

\bibitem{Marchuk1968}
{\sc G.~I. Marchuk}, {\em Some application of splitting-up methods to the
  solution of mathematical physics problems}, Aplikace Matematiky, 13 (1968),
  pp.~103--132, \url{http://eudml.org/doc/14518}.

\bibitem{mclachlanSplittingMethods2002}
{\sc R.~I. McLachlan and G.~R.~W. Quispel}, {\em Splitting methods}, Acta
  Numerica, 11 (2002), pp.~341--434,
  \url{https://doi.org/10.1017/S0962492902000053}.

\bibitem{PAZY83}
{\sc A.~Pazy}, {\em Semigroups of linear operators and applications to partial
  differential equations}, Springer, New York,  (1983).

\bibitem{robertsCoupledMultirateInfinitesimal2020}
{\sc S.~Roberts, A.~Sarshar, and A.~Sandu}, {\em Coupled multirate
  infinitesimal {{GARK}} schemes for stiff systems with multiple time scales},
  SIAM Journal on Scientific Computing, 42 (2020), pp.~A1609--A1638,
  \url{https://doi.org/10.1137/19M1266952}.

\bibitem{Sandu2019}
{\sc A.~Sandu}, {\em A class of multirate infinitesimal gark methods}, SIAM
  Journal on Numerical Analysis, 57 (2019), pp.~2300--2327,
  \url{https://doi.org/10.1137/18M1205492}.

\bibitem{savcenco}
{\sc V.~Savcenco, W.~Hundsdorfer, and J.~G. Verwer}, {\em A multirate time
  stepping strategy for stiff ordinary differential equations}, BIT, 47 (2007),
  pp.~137--155.

\bibitem{schlegelMultirateRungeKutta2009}
{\sc M.~Schlegel, O.~Knoth, M.~Arnold, and R.~Wolke}, {\em Multirate
  {{Runge}}\textendash{{Kutta}} schemes for advection equations}, Journal of
  Computational and Applied Mathematics, 226 (2009), pp.~345--357,
  \url{https://doi.org/10.1016/j.cam.2008.08.009}.

\bibitem{sextonRelaxedMultirateInfinitesimal2018}
{\sc J.~M. Sexton and D.~R. Reynolds}, {\em Relaxed multirate infinitesimal
  step methods for initial-value problems}, arXiv:1808.03718 [cs, math],
  (2018), \url{https://arxiv.org/abs/1808.03718}.

\bibitem{strangConstructionComparisonDifference1968a}
{\sc G.~Strang}, {\em On the construction and comparison of difference
  schemes}, SIAM J. Numer. Anal., 5 (1968), pp.~506--517,
  \url{https://doi.org/10.1137/0705041}.

\bibitem{vernerExplicitRungeKutta1978}
{\sc J.~H. Verner}, {\em Explicit {Runge--Kutta} methods with estimates of the
  local truncation error}, SIAM Journal on Numerical Analysis, 15 (1978),
  pp.~772--790.

\bibitem{wenschMultirateInfinitesimalStep2009}
{\sc J.~Wensch, O.~Knoth, and A.~Galant}, {\em Multirate infinitesimal step
  methods for atmospheric flow simulation}, BIT Numer.~Math., 49 (2009),
  pp.~449--473, \url{https://doi.org/10.1007/s10543-009-0222-3}.

\end{thebibliography}

\end{document}